\theoremstyle{plain}
\newtheorem{theorem}{Theorem}
\newtheorem{corollary}[theorem]{Corollary}
\newtheorem{lemma}[theorem]{Lemma}
\newtheorem{proposition}[theorem]{Proposition}
\theoremstyle{definition}
\newtheorem{remark}[theorem]{Remark}
\def\one{\mathbb 1}
\begin{document}
\baselineskip 18pt

\title[Unbounded continuous operators in Banach lattices]
      {Unbounded continuous operators in Banach lattices}

\author[O.~Zabeti]{Omid Zabeti}

\address[O.~Zabeti]
  {Department of Mathematics, Faculty of Mathematics, Statistics, and Computer science,
   University of Sistan and Baluchestan, Zahedan,
   P.O. Box 98135-674. Iran}
\email{o.zabeti@gmail.com}

\keywords{Unbounded continuous operator, $uaw$-continuous operator, adjoint of an operator, reflexive space, Banach lattice.}
\subjclass[2010]{Primary: 46B42. Secondary: 47B65.}

\begin{abstract}
Motivated by the equivalent definition of a continuous operator between Banach spaces in terms of weakly null nets, we introduce two types of continuous operators between Banach lattices using unbounded absolute weak convergence. We characterize reflexive Banach lattices in terms of these spaces of operators. Furthermore, we investigate whether or not the adjoint of these classes of operators has the corresponding property.
In addition, we show that these kinds of operators are norm closed but not order closed. Moreover, we establish how these classes of continuous operators are connected to the well-known classes of $M$-weakly compact operators and $L$-weakly compact operators.
Finally, we show that the notions of an $M$-weakly operator and a $uaw$-Dunford-Pettis operator have the same meaning; this extends one of the main results of Erkursun-Ozcan et al. (TJM, 2019).
\end{abstract}

\date{\today}

\maketitle
\section{motivation and introduction}
Let us first start with some motivation. Suppose $X$ and $Y$ are Banach spaces and $T:X\to Y$ is an operator. It is known that (see \cite[Theorem 5.22]{AB} for example), $T$ is continuous if and only if it preserves weakly null nets. On the other hand, unbounded convergences have received much attention recently with some deep and inspiring results ( see \cite{DOT, KMT, GTX, Z}). In particular, unbounded absolute weak convergence as a weak version of unbounded norm convergence and also as an unbounded version of weak convergence has been investigated in \cite{Z}. Furthermore, unbounded absolute weak Dunford-Pettis operators ( $uaw$-Dunford-Pettis operators, in brief), as an unbounded version of Dunford-Pettis operators, have been considered recently in \cite{EGZ}.
Before to proceed more, let us consider some preliminaries.

Suppose $E$ is a Banach lattice. A net $(x_{\alpha})$ in $E$ is said to be {\bf unbounded absolute weak convergent} ($uaw$-convergent, for short) to $x\in E$ if for each $u\in E_{+}$, $|x_{\alpha}-x|\wedge u\rightarrow0$ weakly, in notation $x_{\alpha}\xrightarrow{uaw}x$. $(x_{\alpha})$ is {\bf unbounded norm convergent} ($un$-convergent, in brief) if $\||x_{\alpha}-x|\wedge u\|\rightarrow 0$, in notation $x_{\alpha}\xrightarrow{un}x$. Both convergences are topological. For ample information on these concepts, see \cite{DOT, KMT, Z}.

Now, we consider the following observations as unbounded versions of continuous operators.

Suppose $E$ is a Banach lattice and $X$ is a Banach space. A continuous operator $T:E\to X$ is called {\bf unbounded continuous} if for each bounded net $(x_{\alpha})\subseteq E$, $x_{\alpha}\xrightarrow{uaw}0$ implies that $T(x_{\alpha})\xrightarrow{w}0$. Moreover, $T$ is said to be {\bf sequentially unbounded continuous} if for each bounded squence $(x_n)\subseteq E$, $x_n\xrightarrow{uaw}0$ implies that $T(x_n)\xrightarrow{w}0$.

Observe that a continuous operator $T:E\to F$, where $E$ and $F$ are Banach lattices, is said to be {\bf  $uaw$-continuous} if $T$ maps every norm bounded $uaw$-null net into a $uaw$-null net. It is {\bf  sequentially $uaw$-continuous} provided that the property happens for sequences. Consider this point that sequentially $uaw$-continuous operators were introduced in \cite{EGZ} at first as a beside note.

Moreover, recall that $T$ is $uaw$-Dunford-Pettis if for every norm bounded sequence $(x_n)\subseteq E$, $x_n\xrightarrow{uaw}0$ implies that $\|T(x_n)\|\rightarrow 0$; see \cite{EGZ} for a detailed exposition on this topic.

In this paper, our attempt is to investigate more about these classes of continuous operators. More precisely, we characterize reflexive Banach lattices in terms of these classes of continuous operators. Also, we consider several conditions under which the adjoint of an unbounded continuous ( a $uaw$-continuous) operator is again unbounded continuous ( $uaw$-continuous). Moreover, we investigate closedness properties for these spaces of operators. We shall show that these classes of operators are closed to the known classes of $M$-weakly compact operators and $L$-weakly compact ones. Among these, we improve one of the main results recently obtained in \cite{EGZ}; in particular, we show that $uaw$-Dunford-Pettis operators and $M$-weakly compact operators are in fact the same notions, without any extra conditions.

For undefined terminology and concepts, we refer the reader to \cite{AB}. All operators in this note, are assumed to be continuous, unless otherwise stated, explicitly.
\section{main result}
It is easy to see that every $uaw$-Dunford-Pettis operator is sequentially unbounded continuous but the converse is not true in general. The inclusion map from $c_0$ into $\ell_{\infty}$ is unbounded continuous but not $uaw$-Dunford-Pettis; suppose $(x_\alpha)$ is a bounded $uaw$-null net in $c_0$. By \cite[Theorem 7]{Z}, $x_{\alpha}\xrightarrow{w}0$ in $c_0$ so that in $\ell_{\infty}$. Now, observe that the standard basis $(e_n)$ is $uaw$-null but certainly not norm null.
\begin{theorem}\label{1}
Suppose $E$ is a Banach lattice. If every continuous operator $T:\ell_1\to E$ is unbounded continuous, then $E$ is reflexive.
\end{theorem}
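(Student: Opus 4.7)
The plan is to establish the contrapositive: assuming $E$ is non-reflexive, I will exhibit a continuous operator $T\colon \ell_1 \to E$ that fails to be unbounded continuous. The key input is the Eberlein--\v{S}mulian theorem, which says that a Banach space is reflexive if and only if every norm bounded sequence has a weakly convergent subsequence. So non-reflexivity of $E$ yields a norm bounded sequence $(y_n) \subseteq E$ with no weakly convergent subsequence; in particular $y_n \not\xrightarrow{w} 0$. Exploiting the universality of the $\ell_1$-basis, I define $T\colon \ell_1 \to E$ by $T(\sum_n a_n e_n) := \sum_n a_n y_n$. Setting $M := \sup_n \|y_n\|$, the estimate $\sum_n |a_n|\|y_n\| \le M \|(a_n)\|_{\ell_1}$ guarantees absolute convergence in $E$ and the bound $\|T\| \le M$, so $T$ is a bounded (hence continuous) operator sending $e_n$ to $y_n$.

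The only real computation is to verify that the standard basis $(e_n)$ is a norm bounded $uaw$-null sequence in $\ell_1$. For $u = (u_k) \in \ell_1^+$ a direct calculation gives $|e_n| \wedge u = \min(1, u_n)\, e_n$, and hence $\||e_n| \wedge u\| = \min(1, u_n) \le u_n \to 0$. Thus $|e_n| \wedge u \to 0$ in norm, and a fortiori weakly, which is precisely $e_n \xrightarrow{uaw} 0$. Since $(e_n)$ is norm bounded and $uaw$-null while $T(e_n) = y_n$ does not converge weakly to zero, $T$ is not (even sequentially) unbounded continuous, contradicting the hypothesis. Hence $E$ must be reflexive.

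I do not anticipate any serious obstacle. The argument rests only on the Eberlein--\v{S}mulian characterization of reflexivity together with the explicit $uaw$-null behaviour of the $\ell_1$-basis; the remaining manipulations are standard. The one point worth flagging is the convention that \emph{bounded} in the definition of unbounded continuous refers to norm boundedness, consistent with the usage just afterwards for $uaw$-continuous operators, so that the norm bounded sequence $(e_n)$ is a legitimate witness.
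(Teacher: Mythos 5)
Your proof is correct, and it takes a genuinely different and noticeably shorter route than the paper's. The paper argues via the lattice-theoretic characterization of reflexivity: it first shows $E'$ must be order continuous (otherwise $E$ contains a lattice copy of $\ell_1$ and one composes the embedding with the identity on $\ell_1$), and then that $E$ must be a $KB$-space (otherwise $E$ contains a lattice copy of $c_0$ and one builds an explicit chain $\ell_1\to L_1[0,1]\to c_0\to E$ using the Rademacher functions and Lozanovsky's Fourier-coefficient operator); reflexivity then follows from the standard fact that a Banach lattice is reflexive iff it is a $KB$-space with order continuous dual norm. You instead bypass all lattice structure of $E$: Eberlein--\v{S}mulian produces a bounded sequence $(y_n)$ with no weakly convergent subsequence, the universal mapping property of the $\ell_1$-basis realizes $(y_n)$ as $T(e_n)$ for a bounded operator $T$, and your computation that $(e_n)$ is $uaw$-null (indeed $un$-null, matching the paper's citation of \cite[Lemma 2]{Z}) finishes the contrapositive. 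Your argument is more elementary, avoids the two-case analysis and the Rademacher construction entirely, and in fact yields the formally stronger conclusion that non-reflexivity already produces an operator that is not even \emph{sequentially} unbounded continuous. The only thing it ``loses'' is the structural information the paper's route makes explicit (which copies of $c_0$ or $\ell_1$ are responsible), but that is not needed for the statement. Your flagged convention that ``bounded'' means norm bounded is indeed the one the paper uses.
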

\begin{proof}
Suppose every continuous operator is unbounded continuous. First we show that $E'$ is order continuous; suppose not. So, $E$ contains a lattice copy of $\ell_1$ ( with the embedding $\iota$). The identity operator $I$ on $\ell_1$ is continuous but not unbounded continuous. This implies that the composition map $T:\ell_1\to E$ defined via $T=\iota  oI$ is not unbounded continuous which contradicts our assumption.

Now, we prove that $E$ is a $KB$-space. Suppose on a contrary, it is not. So, it contains a lattice copy of $c_0$; with the embedding $\iota_1$. Consider the following diagram.
\[\ell_1\xrightarrow{T}L_1[0,1]\xrightarrow{S}c_0\xrightarrow{\iota_1}E,\]
in which $T(\alpha_1,\alpha_2,\ldots)=\Sigma_{n=1}^{\infty}\alpha_n {r_n}^{+}$ where $(r_n)$ is the sequence of Rademacher functions and $S$ is the classical "Fourier coefficients" due to Lozanovsky defined via $T(f)=(\int_{0}^{1}f(t)\sin t dt,\int_{0}^{1}f(t)\sin 2t dt,\ldots)$; for more details see \cite[Exercise 10, Page 289]{AB}. We claim that this composition is not unbounded continuous and this would complete our proof.
Suppose $(e_n)$ is the standard basis in $\ell_1$ which is certainly $uaw$-null. $Te_n={r_n}^+$ which is not weakly null. Moreover, $S({r_n}^+)$ is not also weakly null in $c_0$; since $\int_{0}^{1} {r_n}^{+}(t)\sin tdt \nrightarrow 0$. Now, it is obvious that $S({r_n}^+)$ is not weakly null in $E$.

\end{proof}
\begin{remark}\label{2}
The other implication of Theorem \ref{1} is not true in general. Consider operator $S:\ell_1 \to L_2[0,1]$ defined via $S(\alpha_1,\alpha_2,\ldots)=\Sigma_{n=1}^{\infty}\alpha_n {r_n}^{+}$. In which $(r_n)$ denotes the sequence of Rademacher functions; for more details about this operator consider \cite[Example 5.17, page 284]{AB}. Note that $S$ is not unbounded continuous; the standard basis $(e_n)$ is $uaw$-null in $\ell_1$ but $S(e_n)={r_n}^{+}$ which is not weakly null since $\int_{0}^{1}{r_n}^{+}(t)dt =\frac{1}{2}$.

But there is a good news if we apply $uaw$-continuity.
\end{remark}
\begin{proposition}
Suppose $E$ and $F$ are Banach lattices such that $F'$ is order continuous. Then every $uaw$-continuous operator $T:E\to F$ is unbounded continuous.
\end{proposition}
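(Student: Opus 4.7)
The plan is to reduce the statement to the known fact that, inside a Banach lattice whose dual is order continuous, $uaw$-convergence of norm bounded nets coincides with weak convergence. This fact is essentially \cite[Theorem 7]{Z} (and is already used implicitly in the example following Theorem \ref{1}, where the authors invoke it for $c_0$, whose dual $\ell_1$ is order continuous).

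Given the above, the argument should proceed as follows. First, fix a norm bounded net $(x_\alpha)\subseteq E$ with $x_\alpha\xrightarrow{uaw}0$. Since $T$ is $uaw$-continuous by hypothesis, $T(x_\alpha)\xrightarrow{uaw}0$ in $F$. Second, because $T$ is continuous the image net $(T(x_\alpha))$ is norm bounded in $F$. Third, invoke the hypothesis that $F'$ is order continuous together with \cite[Theorem 7]{Z} to conclude that this norm bounded $uaw$-null net in $F$ is in fact weakly null, i.e. $T(x_\alpha)\xrightarrow{w}0$. This is exactly the definition of $T$ being unbounded continuous.

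There is no real obstacle: the work has been done upstream in \cite{Z}, and the proposition is essentially a one-line combination of $uaw$-continuity with the coincidence of $uaw$ and weak convergence on bounded sets. The only care needed is to check that $(T(x_\alpha))$ is norm bounded (so that the coincidence theorem applies), which is immediate from continuity of $T$ and the boundedness of $(x_\alpha)$. Thus the proof is expected to be a short three- or four-sentence argument.
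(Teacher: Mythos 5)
Your proposal is correct and coincides with the paper's own argument: the paper likewise applies $uaw$-continuity to get $T(x_\alpha)\xrightarrow{uaw}0$ and then invokes \cite[Theorem 7]{Z} (using order continuity of $F'$) to upgrade this to weak convergence. Your extra remark about checking norm boundedness of $(T(x_\alpha))$ is a sensible precaution that the paper leaves implicit.
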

\begin{proof}
Suppose $T$ is $uaw$-continuous. For every norm bounded $uaw$-null net $(x_{\alpha})$ in $E$, $T(x_{\alpha})\xrightarrow{uaw}0$. By \cite[Theorem 7]{Z}, $T(x_{\alpha})\xrightarrow{w}0$, as claimed.
\end{proof}
\begin{corollary}\label{403}
Suppose $E$ is a Banach lattice and $F$ is an $AM$-space. Then it can be seen easily that an operator $T:E\to F$ is sequentially unbounded continuous if and only if it is sequentially $uaw$-continuous.
\end{corollary}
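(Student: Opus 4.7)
My plan is to reduce the equivalence to a single property of the target: in an $AM$-space $F$, a norm bounded sequence is $uaw$-null if and only if it is weakly null. Once this property is in hand, both ``sequentially unbounded continuous'' and ``sequentially $uaw$-continuous'' demand exactly the same behavior of $T(x_n)$ for every norm bounded $uaw$-null sequence $(x_n)\subseteq E$: the two candidate conclusions $T(x_n)\xrightarrow{uaw}0$ and $T(x_n)\xrightarrow{w}0$ become interchangeable, since $(T(x_n))$ is automatically norm bounded in $F$.

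The implication $uaw\Rightarrow w$ in $F$ is precisely what the preceding proposition extracts from \cite[Theorem~7]{Z}: because $F$ is an $AM$-space, its dual $F'$ is an $AL$-space and therefore order continuous, so any norm bounded $uaw$-null net in $F$ is weakly null. Consequently, every sequentially $uaw$-continuous $T:E\to F$ is automatically sequentially unbounded continuous.

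For the converse direction, I would invoke the Kakutani--Bohnenblust--Krein representation to realize $F$ as a closed sublattice of some $C(K)$ via a lattice isometry. A norm bounded weakly null sequence $(y_n)\subseteq F$ is, by Hahn--Banach extension of functionals, weakly null in $C(K)$ as well, and testing against Dirac measures forces pointwise convergence $y_n(k)\to 0$ on $K$. For any $u\in F_+$, the sequence $|y_n|\wedge u$ is then dominated by $u$ and converges pointwise to zero, so Lebesgue's dominated convergence theorem applied to every $\mu\in M(K)$ yields $|y_n|\wedge u\xrightarrow{w}0$ in $C(K)$, hence in $F$ by restricting functionals. Therefore $y_n\xrightarrow{uaw}0$ in $F$, and every sequentially unbounded continuous $T:E\to F$ is sequentially $uaw$-continuous. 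The only step with any real content is this $w\Rightarrow uaw$ reduction via the $C(K)$-representation; the rest is routine use of Hahn--Banach and the norm boundedness of the relevant sequences.
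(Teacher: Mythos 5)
Your proposal is correct and is essentially the argument the paper leaves implicit: the direction $uaw\Rightarrow w$ is the preceding proposition (since $F'$ is an $AL$-space, hence order continuous), and the direction $w\Rightarrow uaw$ reduces to showing that in an $AM$-space a norm bounded weakly null sequence is $uaw$-null. Your hands-on proof of the latter via the Kakutani representation, Dirac measures, and dominated convergence is a direct proof of the weak sequential continuity of the lattice operations in $AM$-spaces, so you could shorten it by citing \cite[Theorem 4.31]{AB} and then noting that $0\le f(|y_n|\wedge u)\le f(|y_n|)\to 0$ for every positive $f\in F'$; note also that this step genuinely needs sequences rather than nets, which is why the corollary is stated in its sequential form.
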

For the converse, we have the following.
\begin{proposition}
Suppose $E$ and $F$ are Banach lattices and $T:E\to F$ is a positive operator. If $T$ is unbounded continuous, then, it is $uaw$-continuous.
\end{proposition}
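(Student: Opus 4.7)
The plan is to deduce $uaw$-convergence of $T(x_\alpha)$ by dominating $|T(x_\alpha)|\wedge v$ by $T(|x_\alpha|)$ and then applying the hypothesis that $T$ is unbounded continuous. The positivity of $T$ is what makes this domination available, and this is precisely where we use the hypothesis of the proposition.

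In more detail, fix a norm bounded net $(x_\alpha)\subseteq E$ with $x_\alpha\xrightarrow{uaw}0$. The first step is to pass to the absolute values: observing that $uaw$-convergence to zero is defined through $|x_\alpha|\wedge u$, the net $(|x_\alpha|)$ is also bounded and $uaw$-null. Since $T$ is assumed unbounded continuous, this yields $T(|x_\alpha|)\xrightarrow{w}0$ in $F$. The second step is the key domination: using positivity of $T$, for every $v\in F_+$ one has
\[
0\le |T(x_\alpha)|\wedge v \le T(|x_\alpha|).
\]
The third step is to promote the weak convergence of the majorant to weak convergence of $|T(x_\alpha)|\wedge v$. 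For any $g\in F'_+$, the sandwich above gives $0\le \langle |T(x_\alpha)|\wedge v,g\rangle\le \langle T(|x_\alpha|),g\rangle\to 0$. Since $F'$ is a Banach lattice, every functional splits as $g=g^+-g^-$, so the convergence extends to all $g\in F'$, giving $|T(x_\alpha)|\wedge v\xrightarrow{w}0$ for every $v\in F_+$, which is exactly $T(x_\alpha)\xrightarrow{uaw}0$.

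The only mild obstacle is the conceptual one of recognizing that unbounded continuity, which only delivers weak convergence of $T(|x_\alpha|)$, is already strong enough to force $uaw$-convergence of the image net once positivity is in hand; the bound $|T(x_\alpha)|\wedge v \le T(|x_\alpha|)$ packages everything and reduces the problem to testing against positive functionals. No additional structural assumption on $E$ or $F$ is needed beyond the Banach lattice framework.
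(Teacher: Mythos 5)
Your proof is correct and follows essentially the same route as the paper: apply unbounded continuity to the (positive, $uaw$-null) net of absolute values and then use positivity of $T$ to dominate $|T(x_\alpha)|\wedge v$ and test against positive functionals. In fact you are slightly more careful than the paper, whose proof only treats positive nets explicitly and leaves the reduction via $|T(x_\alpha)|\wedge v\le T(|x_\alpha|)$ implicit.
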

\begin{proof}
Suppose $(x_{\alpha})$ is a bounded positive $uaw$-null net in $E$. By the assumption, $T(x_{\alpha})\xrightarrow{w}0$. This implies that $T(x_{\alpha})\rightarrow0$ in the absolute weak topology. Therefore, $T(x_{\alpha})\xrightarrow{uaw}0$.
\end{proof}
\begin{corollary}
Suppose $E$ and $F$ are Banach lattices such that $F'$ possesses an order continuous norm. Then, a positive operator $T:E\to F$ is unbounded continuous if and only if it is $uaw$-continuous.
\end{corollary}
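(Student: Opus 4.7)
The plan is to observe that this corollary is an immediate combination of the two preceding propositions, so no new ideas are required; the only task is to pair the hypotheses with the right implication.

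For the forward direction, I would assume $T$ is unbounded continuous. Since $T$ is positive, the previous proposition applies directly (it does not even require any hypothesis on $F'$), giving that $T$ is $uaw$-continuous. Concretely, any bounded $uaw$-null net $(x_\alpha)$ in $E$ splits into its positive and negative parts, and applying positivity of $T$ together with unbounded continuity gives weak convergence of $T(x_\alpha^+)$ and $T(x_\alpha^-)$, which in turn yields $|T(x_\alpha)| \wedge u \to 0$ weakly for each $u \in F_+$, i.e.\ $uaw$-convergence. But since the earlier proposition already packages this, I would just cite it.

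For the reverse direction, I would assume $T$ is $uaw$-continuous. Since $F'$ is order continuous, the first of the two propositions above applies and yields that $T$ is unbounded continuous. Note that positivity of $T$ plays no role here; the direction $uaw$-continuous $\Rightarrow$ unbounded continuous holds under the $F'$ order continuity hypothesis alone, via \cite[Theorem 7]{Z} (which says $uaw$-null bounded nets are weakly null when the dual has an order continuous norm).

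There is no real obstacle: the corollary is purely a bookkeeping combination of the two propositions, and the hypotheses have been chosen so that one hypothesis powers each direction. I would therefore present the proof as a two-sentence argument, one sentence per direction, each citing the appropriate proposition.
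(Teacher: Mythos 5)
Your proposal is correct and matches the paper's intent exactly: the paper states this as an immediate corollary of the two preceding propositions, with the positivity hypothesis powering the direction ``unbounded continuous $\Rightarrow$ $uaw$-continuous'' and the order continuity of $F'$ powering the converse. Your pairing of hypotheses with implications is precisely the intended (unwritten) argument, so nothing is missing.
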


\begin{theorem}\label{6}
Suppose $E$ is a Banach lattice. If every continuous operator $T:L_1[0,1]\to E$ is $uaw$-continuous, then $E$ is reflexive.
\end{theorem}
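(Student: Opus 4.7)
The plan is to mirror the two-step structure of the proof of Theorem \ref{1}. Reflexivity of a Banach lattice is equivalent to being a KB-space together with having order continuous dual, so I would establish both by contrapositive. If $E'$ is not order continuous, the standard theorem produces a lattice embedding $\iota : \ell_1 \hookrightarrow E$; if $E$ is not a KB-space, one similarly obtains a lattice embedding $\iota_1 : c_0 \hookrightarrow E$. In either case it suffices to exhibit a continuous operator $L_1[0,1] \to E$ that is not $uaw$-continuous, contradicting the hypothesis.

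The central ingredient is a bounded $uaw$-null sequence in $L_1[0,1]$ whose scalar integrals do not vanish. I would take $f_n := n\chi_{[0,1/n]}$: then $\|f_n\|_1 = 1$ and $\int_0^1 f_n = 1$ for every $n$, while for each $u \in L_1[0,1]_+$ the estimate $\|f_n \wedge u\|_1 \le \int_0^{1/n} u \to 0$ (absolute continuity of the integral) shows that $f_n \wedge u \to 0$ in norm, a fortiori weakly, so $f_n \xrightarrow{uaw} 0$. For any nonzero $x_0 \in E$ the rank-one operator $T(f) := \bigl(\int_0^1 f\bigr) x_0$ is continuous, but $T(f_n) = x_0$ is a constant nonzero sequence; taking $u := |x_0| \in E_+$ gives $|T(f_n)| \wedge u = |x_0| \ne 0$ for every $n$, so $T(f_n)$ is not $uaw$-null in $E$. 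Setting $x_0 := \iota(e_1)$ finishes Step 1 and $x_0 := \iota_1(e_1)$ finishes Step 2.

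The main step to watch is the verification that $(f_n)$ is $uaw$-null in $L_1[0,1]$; everything else is formal. For a construction more in the structural spirit of Theorem \ref{1}'s Lozanovsky chain $\ell_1 \to L_1 \to c_0 \to E$, Step 2 can alternatively be carried out by factoring $L_1[0,1] \xrightarrow{S} c_0 \xrightarrow{\iota_1} E$ through the cosine Fourier operator $S(f)_k := \int_0^1 f(t)\cos(kt)\,dt$: then $S(f_n)_k = n\sin(k/n)/k \to 1$ for every $k$, so $(S(f_n))$ is bounded in $c_0$ but not coordinatewise null, hence not $uaw$-null in $c_0$, and this failure transfers to $E$ via the lattice embedding $\iota_1$.
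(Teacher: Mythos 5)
Your proof is correct and runs on the same engine as the paper's: the witness sequence $f_n=n\chi_{[0,1/n]}$, which is bounded and $uaw$-null in $L_1[0,1]$ yet satisfies $\int_0^1 f_n=1$, composed with an operator built from the integration functional. Your direct verification that $(f_n)$ is $uaw$-null (via $\|f_n\wedge u\|_1\le\int_0^{1/n}u\to 0$ and absolute continuity) is a clean substitute for the paper's appeal to \cite[Theorem 4]{Z} and \cite[Corollary 4.2]{DOT}. The difference is in the packaging, and it is instructive. The paper retains the two-case structure of Theorem \ref{1} and writes down explicit operators into the lattice copies of $c_0$ and $\ell_1$, namely $f\mapsto(\frac{1}{n}\int_0^1 f)_n$ and $f\mapsto(\frac{1}{n^2}\int_0^1 f)_n$; but these are exactly rank-one operators $f\mapsto(\int_0^1 f)\,x_0$ with $x_0=(1/n)_n$, resp.\ $(1/n^2)_n$. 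You abstract this to an arbitrary nonzero $x_0\in E$, and your check that the constant sequence $T(f_n)=x_0$ is not $uaw$-null (test against $u=|x_0|$; a constant sequence can only converge weakly to its own value) is airtight. The payoff of your abstraction, which you should state rather than bury, is that the lattice copies of $c_0$ and $\ell_1$ play no role at all: your Steps 1 and 2 are the same step, and the construction shows that for \emph{every} nonzero Banach lattice $E$ there is a continuous operator $L_1[0,1]\to E$ that is not $uaw$-continuous. In other words the hypothesis of Theorem \ref{6} is satisfied only by $E=\{0\}$ and the theorem is vacuously true --- a sharper (if deflating) conclusion than the paper's, and one consistent with the paper's own remark that $S:L_1[0,1]\to\ell_2$, $S(f)=(\frac{1}{n}\int_0^1 f)_n$, already fails to be $uaw$-continuous for the reflexive target $\ell_2$. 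So: correct proof, same core idea, but drop the superfluous case analysis (and the alternative cosine--Fourier factorization, which also works but is likewise unnecessary) and record the stronger statement your rank-one operator actually proves.
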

\begin{proof}
  Suppose on a contrary, $E$ is not reflexive. First, assume that $E$ contains a lattice copy of $c_0$. Consider the operator $T:L_1[0,1]\to c_0$ defined via $T(f)=(\frac{\int_{0}^{1}f(t)dt}{n})$. We claim that $T$ is not $uaw$-continuous. Let $(f_n)$ be the bounded sequence in $L_1[0,1]$ defined as follows: $n$ on the interval $[0,\frac{1}{n}]$ and zero otherwise. Observe that by \cite[Theorem 4]{Z}, $uaw$-convergence and $un$-convergence in $L_1[0,1]$ agree and by using \cite[Corollary 4.2]{DOT}, we conclude that $f_n \xrightarrow{uaw}0$. But $T(f_n)=(x)$ in which $x$ is the constant sequence $(\frac{1}{n})$; not a $uaw$-null sequence. This implies that $T(f_n)$ is not also $uaw$-null in $E$ which is a contradiction.

  Now, suppose $E'$ is not order continuous so that $E$ contains a lattice copy of $\ell_1$. Observe that the operator $T:L_1[0,1]\to \ell_1$ defined via $T(f)=(\frac{\int_{0}^{1}f(t)dt}{n^2})$ is not $uaw$-continuous. Consider the sequence $(f_n)$ as the first part of argument. It is $uaw$-null. Nevertheless, $T(f_n)$ is the constant sequence $(y)$ defined via $y=(\frac{1}{n^2})$ which is certainly not $uaw$-null.
\end{proof}
Note that the other implication of Theorem \ref{6} may fail, as well. The operator $S:L_1[0,1]\to \ell_2$ defined by $S(f)=(\frac{\int_{0}^{1}f(t)dt}{n})$ is not $uaw$-continuous.

Now, we are going to investigate whether or not the converse of Theorem \ref{1} is true. First, we have the following lemma which can be proved via \cite[Theorem 7]{Z}.
\begin{lemma}\label{8}
Suppose $E$ is a Banach lattice. Then $E'$ is order continuous if and only if the identity operator $I$ on $E$ is unbounded continuous.
\end{lemma}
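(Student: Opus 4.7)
The plan is to observe that the statement is essentially a direct translation of the definition of unbounded continuity for the identity map, combined with the characterization of order continuity of $E'$ already available from \cite[Theorem 7]{Z}. Recall that \cite[Theorem 7]{Z} says that $E'$ has order continuous norm if and only if every norm bounded $uaw$-null net in $E$ is weakly null; this is the key black box.

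For the forward direction, assume $E'$ is order continuous and let $(x_\alpha)\subseteq E$ be a norm bounded net with $x_\alpha\xrightarrow{uaw}0$. By \cite[Theorem 7]{Z}, $x_\alpha\xrightarrow{w}0$, and hence $I(x_\alpha)=x_\alpha\xrightarrow{w}0$, so $I$ meets the definition of an unbounded continuous operator.

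For the reverse direction, suppose $I$ is unbounded continuous. Then, by definition, every norm bounded $uaw$-null net $(x_\alpha)\subseteq E$ satisfies $x_\alpha=I(x_\alpha)\xrightarrow{w}0$. This is exactly the hypothesis of the converse direction of \cite[Theorem 7]{Z}, which yields that $E'$ is order continuous.

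I do not foresee any real obstacle here: the only content beyond unpacking definitions is the quoted characterization of order continuity of $E'$ in terms of bounded $uaw$-null nets, and both implications of Lemma \ref{8} follow by pairing that characterization with the defining property of unbounded continuity applied to $T=I$. The only thing to be careful about is to use the bounded version of \cite[Theorem 7]{Z}, since the definition of unbounded continuous operator is restricted to norm bounded nets.
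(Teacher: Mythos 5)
Your proof is correct and matches the paper's approach: the paper itself disposes of Lemma \ref{8} with a one-line appeal to \cite[Theorem 7]{Z}, and your write-up simply unpacks that citation, correctly noting that unbounded continuity of $I$ is verbatim the statement that every norm bounded $uaw$-null net is weakly null, which is exactly the characterization of order continuity of $E'$ given there. Nothing further is needed.
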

Furthermore, the following is immediate.
\begin{corollary}\label{100}
Suppose $E$ is a $KB$-space. Then the identity operator $I$ on $E$ is unbounded continuous if and only if $E$ is reflexive.
\end{corollary}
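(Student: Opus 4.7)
The plan is to combine Lemma \ref{8} with the standard characterization of reflexive Banach lattices in terms of the absence of lattice copies of $c_0$ and $\ell_1$ (equivalently, the fact that $E$ is reflexive iff both $E$ and $E'$ are $KB$-spaces; see \cite[Theorem 4.70]{AB}).

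First, by Lemma \ref{8}, the identity $I$ on $E$ is unbounded continuous if and only if $E'$ is order continuous. So the corollary reduces to showing that, under the hypothesis that $E$ is a $KB$-space, order continuity of $E'$ is equivalent to reflexivity of $E$.

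For the forward direction, assume $E'$ is order continuous. Then $E$ contains no lattice copy of $\ell_1$ (this is the standard duality characterization of order continuity of the dual norm). Since $E$ is a $KB$-space, $E$ also contains no lattice copy of $c_0$. By the classical theorem characterizing reflexive Banach lattices (no lattice copy of $c_0$ and no lattice copy of $\ell_1$), $E$ is reflexive. Conversely, if $E$ is reflexive, then $E'$ is reflexive too, hence in particular a $KB$-space, and therefore has order continuous norm.

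Since both directions are direct consequences of quoted results, there is no real obstacle; the corollary is immediate as the paper claims. The only thing to double-check is the precise statement invoked from \cite{AB} (namely, that the dual norm of a Banach lattice is order continuous precisely when the lattice contains no sublattice lattice-isomorphic to $\ell_1$), which is standard.
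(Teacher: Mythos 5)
Your proof is correct and matches the paper's intent: the paper derives the corollary as an immediate consequence of Lemma \ref{8}, and the reduction you carry out (for a $KB$-space, order continuity of $E'$ is equivalent to reflexivity via the standard $c_0$/$\ell_1$ lattice-embedding characterizations) is exactly the argument being left implicit. Nothing is missing.
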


Observe that $KB$-space assumption is necessary in Corollary \ref{100} and can not be omitted. The identity operator $I$ on $c_0$ is unbounded continuous but $c_0$ is not reflexive.

Before we proceed with a kind of converse for Theorem \ref{1}, we provide some ideal properties.
\begin{proposition}\label{7}
 Let $S:E\to F$ and $T:F\to G$ be two operators between Banach lattices $E, F$, and $G$.
	\begin{itemize}
		\item[\em (i)] {If $T$ is Dunford-Pettis and $S$ is sequentially unbounded continuous then $TS$ is $uaw$-Dunford-Pettis}.
		\item[\em (ii)] {If $T$ is a $uaw$-Dunford--Pettis operator and $S$ is sequentially $uaw$-continuous, then $TS$ is $uaw$-Dunford-Pettis}.
		\item[\em (iii)]
	{ If $T$ is continuous and $S$ is an unbounded continuous operator, then $TS$ is also unbounded continuous}.
		\item[\em (iv)]{ If $T$ is an onto lattice homomorphism  and $S$ is $uaw$-continuous, then $TS$ is also $uaw$-continuous}.
	\end{itemize}
\end{proposition}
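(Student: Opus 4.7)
The plan is to verify each of the four assertions by chasing a bounded $uaw$-null net or sequence along the composition $E \xrightarrow{S} F \xrightarrow{T} G$, using only that any continuous operator maps norm-bounded sets to norm-bounded sets and is weakly continuous. Items (i)--(iii) reduce to bookkeeping: in (i) I would take a bounded $uaw$-null sequence $(x_n)$ in $E$, note that sequential unbounded continuity of $S$ makes $(S(x_n))$ weakly null in $F$, then apply the Dunford--Pettis property of $T$ to conclude $\|T(S(x_n))\|\to 0$. Part (ii) follows the same template with ``weakly null'' replaced by ``$uaw$-null'' in the middle step (one observes that $(S(x_n))$ remains norm bounded since $S$ is continuous, so that the $uaw$-Dunford--Pettis hypothesis on $T$ is applicable). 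Part (iii) takes a bounded $uaw$-null net $(x_\alpha)$, pushes it through $S$ to a weakly null net in $F$, and uses the weak continuity of the bounded operator $T$ to land at a weakly null net in $G$.

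Part (iv) is the only item that needs a genuine argument. Given a bounded $uaw$-null net $(x_\alpha)$ in $E$, $uaw$-continuity of $S$ produces a bounded $uaw$-null net $(S(x_\alpha))$ in $F$, and I must show that an onto lattice homomorphism $T:F\to G$ maps this to a $uaw$-null net in $G$. The key step is that for each $u\in G_+$, surjectivity of $T$ supplies some $w\in F$ with $T(w)=u$; because $T$ is a lattice homomorphism the decomposition $u=T(w^+)-T(w^-)$ with $T(w^+)\wedge T(w^-)=0$ forces $T(w^-)=0$, so $v:=w^+\in F_+$ satisfies $T(v)=u$. The lattice homomorphism identities then give
\[
\bigl|T(S(x_\alpha))\bigr|\wedge u \;=\; T(|S(x_\alpha)|)\wedge T(v) \;=\; T\bigl(|S(x_\alpha)|\wedge v\bigr),
\]
and the right-hand side is weakly null in $G$ because $|S(x_\alpha)|\wedge v\to 0$ weakly in $F$ (by the $uaw$-nullness of $(S(x_\alpha))$ applied to $v\in F_+$) and $T$, being bounded, is weakly continuous.

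The main, and indeed only, obstacle is this auxiliary fact that onto lattice homomorphisms preserve $uaw$-null nets, which hinges on the positive-preimage observation above; everything else in the proposition is mechanical once one records that every operator in sight is norm bounded and therefore weakly continuous.
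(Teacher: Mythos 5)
Your proof is correct and follows essentially the same route as the paper's: items (i)--(iii) are the same direct composition arguments, and in (iv) you use the identical identity $|TS(x_\alpha)|\wedge u = T(|S(x_\alpha)|\wedge v)$ for a positive preimage $v$ of $u$. The only difference is that you actually justify the existence of the positive preimage (via the disjointness of $T(w^+)$ and $T(w^-)$), a fact the paper merely asserts, so your write-up is if anything slightly more complete.
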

\begin{proof}
$(i)$. Suppose $(x_n)$ is a norm bounded $uaw$-null sequence in $E$. So, $S(x_n)\xrightarrow{w}0$. Therefore, $\|TS(x_n)\|\rightarrow 0$.

$(ii)$. Suppose $(x_n)$ is a norm bounded $uaw$-null sequence in $E$. Therefore, $S(x_n)\xrightarrow{uaw}0$. Thus, $\|TS(x_n)\|\rightarrow 0$.

$(iii)$. Suppose $(x_{\alpha})$ is a norm bounded $uaw$-null net in $E$. By assumption, $S(x_{\alpha})\xrightarrow{w}0$ so that $TS(x_{\alpha})\xrightarrow{w}0$.

$(iv)$. First, observe that for each $u\in G_{+}$, there is a $v\in F_{+}$ with $T(v)=u$. Suppose $(x_{\alpha})$ is a norm bounded $uaw$-null net in $E$.
By assumption, $S(x_{\alpha})\xrightarrow{uaw}0$ so that $|S(x_{\alpha})|\wedge v\xrightarrow{w}0$. Therefore,
\[|TS(x_{\alpha})|\wedge u= T(|S(x_{\alpha}|)\wedge u=T(|S(x_{\alpha})|\wedge v)\xrightarrow{w}0.\]
\end{proof}
Furthermore, we see that unbounded continuous operators are very closed to continuous operators. This is done using \cite[Theorem 7]{Z}.
\begin{lemma}\label{300}
Suppose $E$ is a Banach lattice whose dual space is order continuous and $F$ is any Banach lattice. Then every continuous operator $T:E\to F$ is unbounded continuous.
\end{lemma}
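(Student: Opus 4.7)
The plan is to reduce the statement to the fact, already recorded in \cite[Theorem 7]{Z}, that order continuity of $E'$ forces every norm bounded $uaw$-null net in $E$ to be weakly null. Once that reduction is in place, unbounded continuity of an arbitrary norm continuous operator is essentially free, because every norm continuous linear operator between Banach spaces is weak-to-weak continuous.

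More precisely, I would start by fixing a norm bounded net $(x_{\alpha})$ in $E$ with $x_{\alpha}\xrightarrow{uaw}0$. Since $E'$ is order continuous, \cite[Theorem 7]{Z} applies and gives $x_{\alpha}\xrightarrow{w}0$ in $E$. Then, because $T:E\to F$ is norm continuous and linear, its adjoint $T'$ maps $F'$ into $E'$; hence for every $\varphi\in F'$ we have $\varphi\circ T\in E'$, so $\varphi(T(x_{\alpha}))=(T'\varphi)(x_{\alpha})\to 0$. Therefore $T(x_{\alpha})\xrightarrow{w}0$ in $F$, which by definition means that $T$ is unbounded continuous.

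There is no real obstacle: the only nontrivial ingredient is the invocation of \cite[Theorem 7]{Z}, and the rest is the standard observation that norm continuous linear maps are weakly continuous. The only thing worth double-checking is that the cited theorem really gives the implication $uaw$-null plus bounded $\Rightarrow$ weakly null (and not merely an equivalence on positive decreasing nets), but this is precisely how it is used elsewhere in the paper, for instance in the proof of the proposition just after Remark~\ref{2}.
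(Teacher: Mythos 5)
Your proof is correct and matches the paper's approach exactly: the paper gives no separate argument for this lemma beyond the remark that it "is done using \cite[Theorem 7]{Z}", and your write-up simply fills in the (correct) details — bounded $uaw$-null nets are weakly null when $E'$ is order continuous, and norm continuous operators are weak-to-weak continuous.
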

\begin{theorem}\label{402}
For a Banach lattice $E$, the following are equivalent.
\begin{itemize}
		\item[\em (i)] {$E'$ is order continuous}.
		\item[\em (ii)] {For any Banach lattice $F$, every continuous operator $T:E\to F$ is unbounded continuous}.
		\end{itemize}
\end{theorem}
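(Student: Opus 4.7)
The plan is to observe that this theorem is essentially a packaging of two lemmas already in hand: Lemma~\ref{300} provides one direction outright, and Lemma~\ref{8} (together with a clever choice of codomain) gives the other.

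For the implication (i)~$\Rightarrow$~(ii), I would simply invoke Lemma~\ref{300}: if $E'$ is order continuous, then for \emph{any} Banach lattice $F$ and any continuous $T: E \to F$, the operator $T$ is unbounded continuous. No further work is required.

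For the converse (ii)~$\Rightarrow$~(i), the trick is to specialize. Assuming that every continuous operator from $E$ into an arbitrary Banach lattice is unbounded continuous, I would take $F = E$ and apply the hypothesis to the identity operator $I: E \to E$, which is obviously continuous. Then (ii) tells us that $I$ is unbounded continuous, and Lemma~\ref{8} immediately yields that $E'$ is order continuous.

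There is no real obstacle here; the theorem is a genuine synthesis of Lemmas~\ref{300} and~\ref{8}, and the only content is recognizing that the identity operator suffices to witness the reverse implication. The proof will therefore be short, essentially two lines per direction.
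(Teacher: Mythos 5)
Your proposal is correct and matches the paper's own proof exactly: the forward direction is Lemma~\ref{300}, and the reverse direction applies the hypothesis with $F=E$ to the identity operator and invokes Lemma~\ref{8}. Nothing further is needed.
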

\begin{proof}
$(i)\to(ii)$. This is done using Lemma \ref{300}.

$(ii)\to(i)$. By the assumption, The identity operator $I$ on $E$ is unbounded continuous and then use Lemma \ref{8}.
\end{proof}
Now, we have the following converse of Theorem \ref{1}.
\begin{corollary}\label{401}
For a $KB$-space $E$, the following are equivalent.
\begin{itemize}
		\item[\em (i)] {$E$ is reflexive}.
		\item[\em (ii)] {For any Banach lattice $F$, every continuous operator $T:E\to F$ is unbounded continuous}.
		\end{itemize}
\end{corollary}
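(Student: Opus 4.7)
The plan is to obtain both implications by chaining the identity-operator criterion of Corollary \ref{100} with Theorem \ref{402}; the KB hypothesis will be used only at the last moment, to convert unbounded continuity of the identity into reflexivity.

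For (i)$\Rightarrow$(ii) I would rely on the standard fact that a reflexive Banach lattice has a reflexive, hence order continuous, dual. Thus $E'$ is order continuous and Theorem \ref{402} hands us (ii) verbatim for every target Banach lattice $F$.

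For the converse (ii)$\Rightarrow$(i) the trick is to feed the blanket hypothesis its smallest possible input: specialize $F=E$ and take $T$ to be the identity $I_E$, which is certainly continuous. Condition (ii) then forces $I_E$ to be unbounded continuous on $E$, and since $E$ is assumed to be a KB-space, Corollary \ref{100} immediately yields that $E$ is reflexive.

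I do not foresee a genuine obstacle here; this corollary is essentially a bookkeeping exercise. The substantive content has already been established in Lemma \ref{8} (which identifies order continuity of $E'$ with unbounded continuity of $I_E$), in Corollary \ref{100} (which upgrades this to reflexivity under the KB hypothesis), and in Theorem \ref{402}; the present statement is just the clean packaging of these results in the KB setting, where the KB assumption is exactly what is needed to rule out situations such as $E=c_0$ that otherwise prevent order continuity of $E'$ from propagating all the way to reflexivity of $E$.
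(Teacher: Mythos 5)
Your proposal is correct and follows exactly the route the paper intends: the paper states this as an unproved corollary of Theorem \ref{402}, with the KB hypothesis serving precisely to let Corollary \ref{100} (equivalently, Lemma \ref{8} plus the fact that a KB-space with order continuous dual is reflexive) close the loop from unbounded continuity of the identity back to reflexivity. Both directions as you chain them are sound, so there is nothing to add.
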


\begin{remark}
Observe that in general, there are no relations between unbounded continuous operators and weakly compact ones. Consider \cite[Example 2.21]{EGZ}; the operator $T:\ell_1\rightarrow L_2[0,1]$ defined by $T(x_n)=(\sum_{n=1}^{\infty}x_n)\chi_{[0,1]}$ for all $(x_n)\in\ell_2$ where $\chi_{[0,1]}$ denotes the characteristic function of $[0,1]$. It is weakly compact but not unbounded continuous. Indeed, the standard basis $(e_n)$ in $\ell_1$ is $uaw$-null but $T(e_n)$ is not weakly null since $\int_{0}^{1}\chi_{[0,1]}dt=1$.
Moreover, the identity operator on $\ell_{\infty}$ is not weakly compact yet it is unbounded continuous using \cite[Theorem 7]{Z}.
\end{remark}
\begin{theorem}\label{3}
Suppose $E$ is a Banach lattice and $F$ is an order continuous Banach lattice. Then every weakly compact operator $T:E\to F$ has an unbounded continuous adjoint.
\end{theorem}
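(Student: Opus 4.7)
I would show $T'\colon F'\to E'$ sends every bounded $uaw$-null net $(f'_\alpha)\subseteq F'$ to a weakly null net in $E'$. Fix such a net with $\|f'_\alpha\|\le M$. By Gantmacher's theorem, weak compactness of $T$ yields that $T'$ is weakly compact and $T''(E'')\subseteq F$, so $(T'(f'_\alpha))$ lies in a weakly compact subset of $E'$. To obtain $T'(f'_\alpha)\to 0$ in $\sigma(E',E'')$, it suffices to verify for every $x''\in E''$ that
\[
\langle x'',T'(f'_\alpha)\rangle=\langle T''(x''),f'_\alpha\rangle\to 0.
\]
Writing $y:=T''(x'')\in F$, the problem reduces to showing $\langle y,f'_\alpha\rangle\to 0$ for every $y$ in the weakly compact subset $T''(B_{E''})=\overline{T(B_E)}^{w}\subseteq F$.

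\noindent\textbf{Using order continuity.} Since $F$ is order continuous, a classical theorem of Meyer-Nieberg asserts that the relatively weakly compact set $\overline{T(B_E)}^{w}$ is almost order bounded in $F$. For any $\varepsilon>0$ I would pick $u_\varepsilon\in F_+$ with $\overline{T(B_E)}^{w}\subseteq[-u_\varepsilon,u_\varepsilon]+\varepsilon B_F$. Decomposing $y=v+w$ with $|v|\le u_\varepsilon$ and $\|w\|\le\varepsilon$ produces
\[
|\langle y,f'_\alpha\rangle|\le\langle u_\varepsilon,|f'_\alpha|\rangle+\varepsilon M,
\]
so everything reduces to the key lemma: $\langle u,|f'_\alpha|\rangle\to 0$ along the net, for any fixed $u\in F_+$; equivalently, bounded $uaw$-null nets in $F'$ are $\sigma(F',F)$-null.

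\noindent\textbf{Main obstacle.} The key lemma is where the real work lies. For the sequential version, I would note that each $u\in F_+$, viewed as a functional on $F'$, sends bounded disjoint sequences in $F'$ to zero (a classical consequence of the order continuity of $F$, equivalently the identification of $F$ with the order continuous part of $F''$). In operator language, $u\colon F'\to\mathbb R$ is $M$-weakly compact, and by the paper's identification of $M$-weakly compact operators with $uaw$-Dunford-Pettis operators, $u$ is $uaw$-Dunford-Pettis, giving $\langle u,f'_n\rangle\to 0$ for every bounded $uaw$-null sequence. The most delicate part, which I expect to be the main obstacle, is upgrading from sequences to nets; I would do this either by a contradiction argument that extracts a countable cofinal subsequence from any $\delta$-bounded-away subnet, or by factoring the weakly compact $T$ through a reflexive Banach lattice $R$ as $T=BA$ and using reflexivity of $R'$ to force $B'(f'_\alpha)\to 0$ in $\sigma(R',R)$, whence $T'(f'_\alpha)=A'B'(f'_\alpha)\to 0$ weakly in $E'$ by continuity of $A'$.
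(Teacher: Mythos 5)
Your skeleton is exactly the paper's: Gantmacher's theorem gives $T''(E'')\subseteq F$, so weak convergence of $T'(f'_\alpha)$ in $E'$ reduces to $\langle y,f'_\alpha\rangle\to0$ for each $y\in F$, i.e.\ to $w^*$-nullity of the net, and the order continuity of $F$ is used precisely to pass from $uaw$-nullity to $w^*$-nullity in $F'$. The paper disposes of that second step in one line by citing \cite[Proposition 5]{Z}, which is verbatim your ``key lemma.'' Two remarks on your reduction: the Meyer--Nieberg almost-order-boundedness detour is superfluous, since for a single $y\in F$ one already has $|\langle y,f'_\alpha\rangle|\le\langle|y|,|f'_\alpha|\rangle$, an instance of the key lemma with $u=|y|$; and your sequential argument for the key lemma (each $u\in F_+$ acts as an $M$-weakly compact, hence $uaw$-Dunford--Pettis, functional on $F'$) is sound.

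The genuine gap is the passage from sequences to nets, and neither of your proposed fixes works as stated. Extracting a countable ``cofinal subsequence'' from a subnet bounded away from zero fails because the $uaw$-topology is not first countable and the index set need not admit a countable cofinal subset; a sequence of terms of a $uaw$-null net need not be $uaw$-null. The factorization route is circular: with $T=BA$ through a reflexive $R$, proving $B'(f'_\alpha)\to0$ in $\sigma(R',R)$ means proving $\langle f'_\alpha,Br\rangle\to0$ for $r\in R$, i.e.\ $w^*$-nullity of $(f'_\alpha)$ on the range of $B$ --- the very thing at issue; reflexivity of $R$ contributes nothing unless the factorization also transports the $uaw$-structure. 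The correct upgrade is the uniformity device the paper itself uses in Lemma \ref{500}: since $\langle u,|f'_n|\rangle\to0$ for every bounded disjoint sequence in $F'$, \cite[Theorem 4.36]{AB} (with $\rho(f')=\langle u,|f'|\rangle$) produces, for each $\varepsilon>0$, a single $g'\in F'_+$ such that $\langle u,(|f'|-g')^+\rangle<\varepsilon$ for all $f'$ in a multiple of the unit ball of $F'$; then
\[
\langle u,|f'_\alpha|\rangle\le\langle u,|f'_\alpha|\wedge g'\rangle+\langle u,(|f'_\alpha|-g')^+\rangle,
\]
and the first term tends to $0$ because $|f'_\alpha|\wedge g'\xrightarrow{w}0$ and $u\in F\subseteq F''$, giving $\limsup_\alpha\langle u,|f'_\alpha|\rangle\le\varepsilon$. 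With that replacement (or simply by quoting \cite[Proposition 5]{Z}) your proof closes.
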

\begin{proof}
Assume that $({x_{\alpha}}')$ is a norm bounded net in $F'$ which is $uaw$-null. By \cite[Proposition 5]{Z},
${x_{\alpha}}'\xrightarrow{w*}0$. By the Gantmacher theorem \cite[Theorem 5.23]{AB}, $T'({x_{\alpha}}')\xrightarrow{w}0$, as desired.
\end{proof}
\begin{remark}\label{5}
Weakly compactness of operator $T$ and also order continuity of $F$  are essential in Theorem \ref{3} and can not be dropped. Consider the identity operator  $I:c_0\to c_0$. $I$ is not weakly compact but $F$ is order continuous. Furthermore, $I$ is also unbounded continuous. Its adjoint, $I:\ell_1\to \ell_1$ is not unbounded continuous; assume $(e_n)$ is the standard basis of $\ell_1$. It is $uaw$-null by \cite[Lemma 2]{Z}. But, certainly, it is not weakly null in $\ell_1$.

Also, consider the operator $T:L_2[0,1]\to \ell_{\infty}$ defined via $T(f)=(\int_{0}^{1}f(t)dt,\int_{0}^{1}f(t)dt,\ldots)$. It is weakly compact but $F$ is not order continuous. Consider the operator $T':(\ell_{\infty})'\to L_2[0,1]$. It is not unbounded continuous. Consider the standard basis $(e_n)$ which is $uaw$-null in $(\ell_{\infty})'$. But $<T'(e_n),\one>=<e_n,T(\one)>=1$.
\end{remark}

Suppose $E$ is a Banach lattice. The class of all unbounded continuous operators on $E$ is denoted by $B_{uc}(E)$, the class of all $uaw$-continuous operators on $E$ will get the terminology $B_{uaw}(E)$. In this step, we consider some closedness properties for these classes of continuous operators.
\begin{proposition}
Suppose $E$ is a Banach lattice. Then $B_{uc}(E)$ is closed as a subspace of the space of all continuous operators on $E$.
\end{proposition}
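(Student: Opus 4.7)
The plan is to take a norm-convergent sequence $(T_n) \subseteq B_{uc}(E)$ with $T_n \to T$ in operator norm, and verify that $T$ itself satisfies the unbounded continuity condition. Since the full space of continuous operators is a Banach space, the limit $T$ is automatically continuous, so the only point requiring work is that $T$ preserves bounded $uaw$-null nets into weakly null nets.

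First I would fix an arbitrary norm bounded net $(x_\alpha) \subseteq E$ with $x_\alpha \xrightarrow{uaw} 0$, together with a bound $M$ such that $\norm{x_\alpha} \le M$ for all $\alpha$. The aim is to show $f(T(x_\alpha)) \to 0$ for every $f \in E'$. Given $\varepsilon > 0$ and $f \in E'$, I would pick $n$ large enough that $\norm{T - T_n} \le \varepsilon/(1 + M\norm{f})$, and then decompose
\[
\bigabs{f(T(x_\alpha))} \le \bigabs{f(T_n(x_\alpha))} + \norm{f}\,\norm{T-T_n}\,M.
\]
The second term is at most $\varepsilon$ by the choice of $n$. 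For the first term, the hypothesis $T_n \in B_{uc}(E)$ together with the fact that $(x_\alpha)$ is bounded and $uaw$-null gives $T_n(x_\alpha) \xrightarrow{w} 0$, so $\bigabs{f(T_n(x_\alpha))}$ is eventually below $\varepsilon$. Combining the two bounds shows $\bigabs{f(T(x_\alpha))} \le 2\varepsilon$ eventually, which yields weak convergence $T(x_\alpha) \xrightarrow{w} 0$ since $f$ was arbitrary.

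This proves $T \in B_{uc}(E)$, completing the argument. I do not foresee any real obstacle here: the proof is a standard $\varepsilon/2$-argument exploiting the uniform bound on $(x_\alpha)$ together with norm convergence $T_n \to T$, and this kind of estimate works precisely because the definition of unbounded continuity restricts attention to \emph{bounded} nets, which gives us the uniform control needed to swap the limit past the approximation.
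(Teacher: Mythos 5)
Your proof is correct and follows essentially the same route as the paper's: a standard $\varepsilon/2$-argument splitting $f(T(x_\alpha))$ into $f(T_n(x_\alpha))$ plus an error term controlled by $\norm{T-T_n}$ and the uniform bound on the net. If anything, your version is slightly more careful than the paper's in tracking the bound $M$ on $\norm{x_\alpha}$ and the factor $\norm{f}$.
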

\begin{proof}
Suppose $(T_\alpha)$ is a net of unbounded continuous operators which is convergent to the operator $T$. We need to show that $T$ is unbounded continuous. For any $\varepsilon>0$, there is an $\alpha_0$ such that $\|T_{\alpha_0}-T\|<\frac{\varepsilon}{2}$. So, for each $x$ with $\|x\|\leq 1$, $\|T_{\alpha_0}(x)-T(x)\|<\frac{\varepsilon}{2}$. Assume that $(x_\beta)$ is a norm bounded $uaw$-null net in $E$. This means that $\|T_{\alpha_0}(x_{\beta})-T(x_{\beta})\|<\frac{\varepsilon}{2}$. Note that $T_{\alpha_0}(x_{\beta})\xrightarrow{w}0$ so that for a fixed $f\in E^{*}_{+}$ and sufficiently large $\beta$, $ f(T_{\alpha_0}(x_{\beta}))<\frac{\varepsilon}{2}$ so that $ f(T(x_{\beta}))<\varepsilon$.
\end{proof}
\begin{proposition}
Suppose $E$ is a Banach lattice. Then $B_{uaw}(E)$ is closed as a subspace of the space of all continuous operators on $E$.
\end{proposition}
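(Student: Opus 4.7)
The plan is to mimic the argument given for $B_{uc}(E)$, but replace weak testing by positive functionals applied to $|\cdot|\wedge u$, and use a lattice inequality to transfer the estimate from $T_{\alpha_0}$ to $T$. So let $(T_\alpha)$ be a net in $B_{uaw}(E)$ converging in operator norm to $T$; let $(x_\beta)$ be a norm bounded $uaw$-null net in $E$, say with $\|x_\beta\|\le M$ for all $\beta$. The goal is to show $T(x_\beta)\xrightarrow{uaw}0$, i.e., $f(|T(x_\beta)|\wedge u)\to 0$ for every $u\in E_+$ and every $f\in E'_+$ (which is enough since every element of $E'$ splits as $f^+-f^-$ and the net $|T(x_\beta)|\wedge u$ is positive).

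Fix $u\in E_+$, $f\in E'_+$ with $\|f\|\le 1$, and $\varepsilon>0$. First, choose $\alpha_0$ so that $\|T_{\alpha_0}-T\|<\varepsilon/(2(M+1))$. Then for every $\beta$,
\[
\|T_{\alpha_0}(x_\beta)-T(x_\beta)\|\le \|T_{\alpha_0}-T\|\cdot\|x_\beta\|<\varepsilon/2.
\]
Using the standard lattice inequalities $\bigl||a|-|b|\bigr|\le |a-b|$ and $|a\wedge u - b\wedge u|\le|a-b|$, we obtain
\[
\bigl||T(x_\beta)|\wedge u - |T_{\alpha_0}(x_\beta)|\wedge u\bigr|\le |T(x_\beta)-T_{\alpha_0}(x_\beta)|,
\]
so positivity and $\|f\|\le 1$ give
\[
\bigl|f(|T(x_\beta)|\wedge u)-f(|T_{\alpha_0}(x_\beta)|\wedge u)\bigr|<\varepsilon/2.
\]

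Since $T_{\alpha_0}$ is $uaw$-continuous and $(x_\beta)$ is a norm bounded $uaw$-null net, $T_{\alpha_0}(x_\beta)\xrightarrow{uaw}0$, hence $f(|T_{\alpha_0}(x_\beta)|\wedge u)\to 0$. Thus for all sufficiently large $\beta$, $f(|T_{\alpha_0}(x_\beta)|\wedge u)<\varepsilon/2$, giving $f(|T(x_\beta)|\wedge u)<\varepsilon$. This proves $|T(x_\beta)|\wedge u\xrightarrow{w}0$, so $T\in B_{uaw}(E)$ and $B_{uaw}(E)$ is closed.

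There is no serious obstacle here; the only step that needs care is the passage from the norm estimate on $T_{\alpha_0}(x_\beta)-T(x_\beta)$ to an estimate on $f(|T(x_\beta)|\wedge u)-f(|T_{\alpha_0}(x_\beta)|\wedge u)$, which is handled by the two elementary lattice inequalities cited above combined with the positivity of $f$. The uniform bound $\|x_\beta\|\le M$ coming from norm boundedness of the net is what makes the single choice of $\alpha_0$ work for all $\beta$.
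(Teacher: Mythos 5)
Your proof is correct and follows essentially the same route as the paper: approximate $T$ by some $T_{\alpha_0}$ in operator norm, transfer the estimate to $f(|\cdot|\wedge u)$ via elementary lattice inequalities (the paper uses $|a\wedge c-b\wedge c|\le|a-b|\wedge c$, you use Birkhoff's inequality together with $\bigl||a|-|b|\bigr|\le|a-b|$, to the same effect), and then invoke the $uaw$-continuity of $T_{\alpha_0}$. Your version is in fact slightly more careful than the paper's in tracking the bound $M$ on the net and the normalization of $f$.
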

\begin{proof}
Suppose $(T_\alpha)$ is a net of $uaw$-continuous operators which is convergent to the operator $T$. We need to show that $T$ is $uaw$-continuous. For any $\varepsilon>0$, there is an $\alpha_0$ such that $\|T_{\alpha_0}-T\|<\frac{\varepsilon}{2}$. So, for each $x$ with $\|x\|\leq 1$, $\|T_{\alpha_0}(x)-T(x)\|<\frac{\varepsilon}{2}$. Assume that $(x_\beta)$ is a norm bounded $uaw$-null net in $E$. This means that $\|T_{\alpha_0}(x_{\beta})-T(x_{\beta})\|<\frac{\varepsilon}{2}$. Note that $T_{\alpha_0}(x_{\beta})\xrightarrow{uaw}0$. Fix $f\in E^{*}_{+}$ and $u\in E_{+}$. Observe that for $a,b,c\geq 0$ in an Archimedean vector lattice, we have $|a\wedge c-b\wedge c|\leq |a-b|\wedge c$. Therefore,
\[f(|T_{\alpha_0}(x_{\beta})|\wedge u)-f(|T(x_\beta)|\wedge u)\leq \||T_{\alpha_0}((x_{\beta})|\wedge u)-(|T(x_\beta)|\wedge u)\|\]
\[\leq \|||T_{\alpha_0}(x_\beta)|-|T(x_{\beta})||\wedge u\|\leq \|||T_{\alpha_0}(x_\beta)|-|T(x_\beta)||\|\leq \|T_{\alpha}(x_{\beta})-T(x_{\beta})\|<\frac{\varepsilon}{2}.\]

 On the other hand, for sufficiently large $\beta$, $ f(T_{\alpha_0}(x_{\beta})\wedge u)\rightarrow 0$. This, in turn, results in $f(T(x_{\beta})\wedge u)\rightarrow 0$.
\end{proof}
\begin{remark}
Neither $B_{uc}(E)$ nor $B_{uaw}(E)$ are order closed, in general. Consider the operator $S$ in Remark \ref{2}. Put $S_n=SP_n$, in which $P_n$ is the canonical projection on $\ell_1$. Observe that each $S_n$ is finite rank. So, in the range space, $uaw$-convergence, weak convergence and norm one agree.
Therefore, we conclude that each $S_n$ is both $uaw$-continuous and unbounded continuous. Moreover $S_n\uparrow S$ and we have seen in Remark \ref{2} that $S$ is neither unbounded continuous nor $uaw$-continuous.

\end{remark}
\begin{remark}
Suppose $E$ and $F$ are Banach lattices and $T,S:E\to F$ are continuous operators such that $0\leq S\leq T$. It can be easily verified that if $T$ is either $uaw$-continuous or unbounded continuous, then so is $S$.
\end{remark}
\begin{theorem}
Suppose $E$ is a Banach lattice whose dual is order continuous and atomic and $F$ is an order continuous Banach lattice. Then every positive operator $T:E\to F$ has a $uaw$-continuous adjoint.
\end{theorem}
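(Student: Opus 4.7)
The plan is to combine two passages across the adjoint: first, use order continuity of $F$ to pass from $uaw$-null in $F'$ to weak*-null, then exploit the atomic order continuous structure of $E'$ to descend from bounded weak*-null back down to $uaw$-null.

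Concretely, let $(y'_\alpha)$ be a norm bounded $uaw$-null net in $F'$. Because $F$ is order continuous, \cite[Proposition 5]{Z} (already used in Theorem \ref{3}) yields $y'_\alpha\xrightarrow{w*}0$. The adjoint $T'$ is norm bounded and weak*-to-weak* continuous, so $T'(y'_\alpha)$ is a norm bounded, weak*-null net in $E'$. It then remains to show that any norm bounded weak*-null net in the atomic order continuous Banach lattice $E'$ is $uaw$-null.

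For this last step, the plan is to use the discrete structure of $E'$. Since $E'$ is atomic and order continuous, one can pick a maximal disjoint system $\{a_i\}_{i\in I}$ of positive atoms; each $\mathbb{R}a_i$ is a projection band, with rank-one band projection $P_{a_i}(z)=\lambda_i(z)a_i$ for a positive coordinate functional $\lambda_i$. The structural fact I would invoke is that in this setting each $\lambda_i$ is weak*-continuous on $E'$, i.e., realizes as an element of $E\subseteq E''$; this follows from order continuity, which makes the complementary band $\mathbb{R}a_i^d$ weak*-closed and forces $P_{a_i}$ to be weak*-continuous. Granted this, the weak*-null condition on $T'(y'_\alpha)$ yields $\lambda_i(T'(y'_\alpha))\to 0$ for every atom, i.e., coordinatewise nullity. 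Combined with the description of bounded $uaw$-null nets in an atomic order continuous Banach lattice as exactly the coordinatewise null nets at atoms (cf.\ \cite[Theorem 7]{Z} and the corresponding discussion of $un$-convergence in \cite{KMT}), this gives $T'(y'_\alpha)\xrightarrow{uaw}0$ in $E'$.

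The main obstacle is the intermediate structural claim that atoms of the atomic order continuous dual $E'$ carry weak*-continuous coordinate functionals, and the characterization of $uaw$-convergence at atoms; once both are in hand the result is essentially a diagram chase through $T'$. Positivity of $T$ is not explicitly used in the passage above, but is compatible with the argument since $T'$ inherits positivity and preserves the ordered structure relevant to the band projections.
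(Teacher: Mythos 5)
Your overall skeleton matches the paper's: pass from $uaw$-null to weak$^*$-null in $F'$ via \cite[Proposition 5]{Z}, push through the weak$^*$-to-weak$^*$ continuous adjoint, and then recover $uaw$-nullity in $E'$ from atomicity and order continuity. The gap is in the last step. The structural claim you invoke --- that the coordinate functionals at the atoms of $E'$ are weak$^*$-continuous because the complementary bands are weak$^*$-closed --- is false in general, and with it the assertion that every norm bounded weak$^*$-null net in an atomic order continuous dual Banach lattice is $uaw$-null. Take $E=c$. Then $E'$ is lattice isometric to $\mathbb{R}\varphi\oplus\ell_1$ with the $\ell_1$-norm, where $\varphi$ is the limit functional; this is an atomic order continuous $AL$-space and $\varphi$ is an atom. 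Its complementary band (the standard copy of $\ell_1$) is not weak$^*$-closed, since $e_n^*\xrightarrow{w^*}\varphi$ (because $x_n\to\lim x$ for every $x\in c$); hence the coordinate functional at $\varphi$ is not weak$^*$-continuous. Concretely, the bounded sequence $y_n=\varphi-e_n^*$ is weak$^*$-null, yet $|y_n|=\varphi+e_n^*$ gives $|y_n|\wedge\varphi=\varphi$ for all $n$, so $(y_n)$ is not $uaw$-null. Your key lemma therefore fails exactly where you need it.

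The hypothesis you set aside --- positivity of $T$ --- is what rescues the argument, and it is how the paper proceeds. For a \emph{positive} bounded weak$^*$-null net $(z_\alpha)$ in $E'$ one does get coordinatewise nullity without any weak$^*$-continuity of the coordinate functionals: if $a$ is an atom with band projection $P_az=\lambda_a(z)a$, then $0\le\lambda_a(z_\alpha)a=P_az_\alpha\le z_\alpha$, so choosing $x\in E_+$ with $a(x)>0$ yields $\lambda_a(z_\alpha)\le z_\alpha(x)/a(x)\to0$; together with the description of $un$/$uaw$-convergence in atomic order continuous lattices (this is the role of \cite[Proposition 8.5]{KMT} and \cite[Theorem 7]{Z} in the paper) this gives $uaw$-nullity of positive bounded weak$^*$-null nets. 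The paper accordingly runs the diagram chase only for positive nets $(x_\alpha')$ in $F'$; the general case reduces to this because $T'$ is positive: $x_\alpha'\xrightarrow{uaw}0$ iff $|x_\alpha'|\xrightarrow{uaw}0$, and $|T'x_\alpha'|\le T'|x_\alpha'|$ plus the solidity of the $uaw$-condition transfers the conclusion back to $T'x_\alpha'$. (That positivity is genuinely needed here is corroborated by the paper's next proposition, which drops positivity of $T$ only at the cost of assuming $E$ itself order continuous.) If you restore positivity in this way your argument closes; as written it rests on a false lemma.
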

\begin{proof}
  Consider positive operator $T':F'\to E'$. Suppose $({x_{\alpha}}')$ is a positive bounded net in $F'$ such that ${x_{\alpha}}'\xrightarrow{uaw}0$. By \cite[Proposition 5]{Z}, ${x_{\alpha}}'\xrightarrow{w^*}0$ so that $T'{x_{\alpha}}'\xrightarrow{w^*}0$. By \cite[Proposition 8.5]{KMT} and \cite[Theorem 7]{Z}, we see that $T'{x_{\alpha}}'\xrightarrow{uaw}0$, as claimed.
\end{proof}
Furthermore, when the operator $T$ is not positive, we may consider the following.
\begin{proposition}
Suppose $E$ is an order continuous Banach lattice whose dual is also order continuous and atomic and $F$ is an order continuous Banach lattice. Then every continuous operator $T:E\to F$ has a $uaw$-continuous adjoint.
\end{proposition}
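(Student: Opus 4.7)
The plan is to parallel the proof of the preceding theorem, with the sole modification that the positivity hypothesis on $T$ is replaced by the order continuity of $E$. Let $(x'_{\alpha})$ be a norm bounded $uaw$-null net in $F'$. Since $F$ is order continuous, \cite[Proposition 5]{Z} yields $x'_{\alpha}\xrightarrow{w^*}0$ in $F'$. The continuity of $T$ makes $T'\colon F'\to E'$ a bounded and $w^*$-$w^*$-continuous operator, so $T'(x'_{\alpha})\xrightarrow{w^*}0$ in $E'$ and this net stays norm bounded. So far none of this required any structure beyond order continuity of $F$ and boundedness of $T$.

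The remaining step, which is the crux, is to upgrade the bounded $w^*$-null net $(T'(x'_{\alpha}))$ in $E'$ to a $uaw$-null net. This is exactly the transition effected by \cite[Proposition 8.5]{KMT} and \cite[Theorem 7]{Z} in the positive case of the previous theorem: in an atomic order continuous Banach lattice such as $E'$, $uaw$-convergence on norm bounded sets is equivalent to the convergence of the coordinate at each atom. The extra ingredient needed to execute this passage without positivity is precisely the order continuity of $E$, which forces the coordinate functional at each atom of $E'$ to be represented by an element of $E$, and hence to be $w^*$-continuous. Consequently the bounded $w^*$-null net $(T'(x'_{\alpha}))$ is coordinate-wise null at every atom, and so $uaw$-null, which is the conclusion.

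The main obstacle is the last identification: showing that the atomic coordinate functionals on the atomic order continuous dual $E'$ can be taken in $E$ rather than merely in $E''$. In the positive companion theorem this difficulty was side-stepped, because positivity of $T'$ restricted the analysis to positive bounded $w^*$-null nets, for which the quoted results apply directly; here we must accommodate signed nets, and order continuity of $E$ is exactly what licences pairing $w^*$-convergence on $E'$ against the atomic coordinates. Once this is in place, the proof reduces to the routine chain \,$uaw\Rightarrow w^*\Rightarrow w^*$ through $T'\Rightarrow uaw$, mirroring the structure of the preceding theorem.
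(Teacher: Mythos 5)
Your argument follows the paper's proof step for step: $uaw$-null implies $w^*$-null in $F'$ by \cite[Proposition 5]{Z} using order continuity of $F$, then $w^*$-$w^*$-continuity of $T'$ gives a bounded $w^*$-null net in $E'$, and finally the passage from bounded $w^*$-null to $uaw$-null in the atomic order continuous lattice $E'$ is exactly what the paper delegates to \cite[Theorem 8.4]{KMT} together with \cite[Theorem 7]{Z}. Your unpacking of where the order continuity of $E$ enters (making the atomic coordinate functionals on $E'$ $w^*$-continuous) is just the content of that citation, so the two proofs coincide; if anything, yours is slightly more careful, since the paper's own proof restricts without comment to positive nets while you treat arbitrary bounded $uaw$-null nets.
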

\begin{proof}
Consider operator $T':F'\to E'$. Suppose $({x_{\alpha}}')$ is a positive bounded net in $F'$ such that ${x_{\alpha}}'\xrightarrow{uaw}0$. By \cite[Proposition 5]{Z}, ${x_{\alpha}}'\xrightarrow{w^*}0$ so that $T'{x_{\alpha}}'\xrightarrow{w^*}0$. By \cite[Theorem 8.4]{KMT} and \cite[Theorem 7]{Z}, we see that $T'{x_{\alpha}}'\xrightarrow{uaw}0$, as claimed.
\end{proof}
\begin{theorem}\label{2020}
Suppose $E$ is a Banach lattice. If every operator $T:E\to \ell_{\infty}$ is $uaw$-Dunford-Pettis, then $E$ is reflexive.
\end{theorem}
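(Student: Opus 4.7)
The plan is to argue by contrapositive: assuming $E$ is not reflexive, I shall exhibit a bounded operator $T : E \to \ell_{\infty}$ that fails to be $uaw$-Dunford--Pettis. Recall that a Banach lattice is reflexive precisely when it is a $KB$-space whose dual is order continuous. Hence failure of reflexivity forces one of the following alternatives: either $E$ is not a $KB$-space (so $E$ contains a lattice copy of $c_{0}$), or $E'$ is not order continuous (so $E$ contains a lattice copy of $\ell_{1}$).

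In either alternative, the image of the standard basis under the corresponding lattice embedding yields a positive, pairwise disjoint, norm bounded sequence $(x_{n})\subseteq E_{+}$ with $\|x_{n}\|\ge c$ for some $c>0$. I claim $(x_{n})$ is $uaw$-null in $E$. Fix $u\in E_{+}$ and $f\in E'_{+}$; the elements $x_{n}\wedge u$ are pairwise disjoint and dominated by $u$, so
\[
\sum_{n=1}^{N} f(x_{n}\wedge u)=f\Bigl(\sum_{n=1}^{N} x_{n}\wedge u\Bigr)\le f(u)
\]
for every $N$, which gives $\sum_{n} f(x_{n}\wedge u)<\infty$ and hence $f(x_{n}\wedge u)\to 0$. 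Since every element of $E'$ is a difference of positive functionals, $x_{n}\wedge u\xrightarrow{w}0$ for each $u\in E_{+}$.

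For the construction of $T$, I use the Banach-lattice version of Hahn--Banach norming: for each $n$, there is $f_{n}\in E'_{+}$ with $\|f_{n}\|=1$ and $f_{n}(x_{n})=\|x_{n}\|$. Set
\[
T(x)=\bigl(f_{n}(x)\bigr)_{n\in\mathbb{N}},\qquad x\in E.
\]
Since $|f_{n}(x)|\le\|x\|$ for every $n$, the map $T$ sends $E$ into $\ell_{\infty}$ and $\|T\|\le 1$. The $n$-th coordinate of $T(x_{n})$ is $f_{n}(x_{n})=\|x_{n}\|\ge c$, so $\|T(x_{n})\|_{\infty}\ge c$ for every $n$. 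Thus $(x_{n})$ is a norm bounded $uaw$-null sequence whose image under $T$ fails to be norm null, so $T$ is not $uaw$-Dunford--Pettis, contradicting the hypothesis.

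The main technical hurdle is verifying that a bounded disjoint sequence is $uaw$-null inside $E$ itself, without imposing any order-continuity assumption on $E$; the telescoping estimate in the second paragraph handles this uniformly across both alternatives. Everything else is standard Banach-lattice bookkeeping.
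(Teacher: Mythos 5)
Your proposal is correct and follows essentially the same route as the paper: argue by contrapositive, use the dichotomy that a non-reflexive Banach lattice contains a lattice copy of $c_0$ or of $\ell_1$, extract from the embedded basis a positive disjoint norm-bounded sequence $(x_n)$ bounded away from zero, observe that disjointness forces $x_n\xrightarrow{uaw}0$, and build an operator into $\ell_\infty$ from a sequence of functionals that keeps $\|T(x_n)\|_\infty$ bounded below. The only differences are cosmetic and harmless: where the paper invokes \cite[Lemma 3.4]{AEH} to get a disjoint biorthogonal sequence $(g_n)$ and treats the two cases separately, you use plain Hahn--Banach norming functionals $f_n$ (for which positivity is obtainable by passing to $|f_n|$, as you implicitly do) and handle both cases uniformly, and you reprove the fact that bounded disjoint sequences are $uaw$-null (the paper's citation of \cite[Lemma 2]{Z}) via a correct telescoping estimate.
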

\begin{proof}
Suppose not. So, $E$ contains a lattice copy of $c_0$ or $\ell_1$. For the former case, $E$ contains a disjoint positive sequence $(x_n)$ which is not norm convergent. By \cite[Lemma 3.4]{AEH}, there exists a bounded positive disjoint sequence $(g_n)$ in $E'$  such that $g_n(x_n)=1$ and $g_n(x_m)=0$ for all $m \neq n$. Define the operator $T:E\to \ell_{\infty}$ via $T(x)=(g_r(x))_{r\in \Bbb N}$. Note that by \cite[Lemma 2]{Z}, $x_n\xrightarrow{uaw}0$ but $\|T(x_n)\|\geq |g_n(x_n)|=1$ so that $\|T(x_n)\|\nrightarrow 0$. This shows that $T$ is not $uaw$-Dunford-Pettis.
%Consider the operator $T:E\to F$ defined via $T=\iota o S$ in which $\iota$ is the canonical inclusion from $\ell_{\infty}$ into $F$ by considering \cite[Theorem 4.51]{AB}. This contradicts our assumption.

For the latter case, $E$ contains a positive bounded disjoint sequence $(x_n)$ which is not weakly null. By a similar argument as we had at the first part of the proof, we get another contradiction, too.

\end{proof}
Note that the converse of Theorem \ref{2020} is not true, in general. The inclusion map $\iota:\ell_2\to \ell_{\infty}$ is not $uaw$-Dunford-Pettis. However, when we consider sequentially unbounded continuous operators, we have better news. More precisely, we present another statement similar to Theorem \ref{402}.
\begin{theorem}\label{2021}
Suppose $E$ is a Banach lattice. For every operator $T:E\to \ell_{\infty}$, the following assertions are equivalent.
\begin{itemize}
		\item[\em (i)] {$E'$ is order continuous}.
	\item[\em (ii)] {Every continuous operator $T:E\to \ell_{\infty}$ is sequentially unbounded continuous}.
		\end{itemize}
\end{theorem}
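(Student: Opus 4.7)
The plan is to handle the two implications separately; the real content lies in (ii) $\Rightarrow$ (i), while (i) $\Rightarrow$ (ii) is an immediate specialization of Lemma \ref{300}. When $E'$ is order continuous, every norm-bounded $uaw$-null sequence in $E$ is weakly null by \cite[Theorem 7]{Z}, so for any continuous $T\colon E\to \ell_\infty$ and any norm-bounded $uaw$-null sequence $(x_n) \subseteq E$, continuity transfers this to $T(x_n)\xrightarrow{w} 0$ in $\ell_\infty$, giving sequential unbounded continuity for free.

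For (ii) $\Rightarrow$ (i) I would argue by contrapositive. Suppose $E'$ is not order continuous. By the standard Kaplan-type characterization, $E$ then admits a norm-bounded positive disjoint sequence $(x_n)$ that fails to be weakly null; by \cite[Lemma 2]{Z} any such disjoint bounded sequence is automatically $uaw$-null. Fix $f\in E'$ with $f(x_n)\not\to 0$, which exists because $(x_n)$ is not weakly null. Define the continuous operator $T\colon E\to \ell_\infty$ by
\[
T(x) := f(x)\,\one, \qquad \one=(1,1,1,\ldots).
\]
Then $T(x_n) = f(x_n)\,\one$, and pairing against the first-coordinate functional $e_1\in \ell_1\subseteq (\ell_\infty)^*$ gives $\langle T(x_n),e_1\rangle = f(x_n)\not\to 0$. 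Hence $T(x_n)$ is not weakly null in $\ell_\infty$, so $T$ is not sequentially unbounded continuous, contradicting (ii).

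The main subtlety I anticipate is the choice of witness operator $T$. A naive imitation of the proof of Theorem \ref{2020} would use the biorthogonal sequence $(g_n)\subseteq E'$ supplied by \cite[Lemma 3.4]{AEH} and set $T(x)=(g_r(x))_r$, producing $T(x_n)=e_n$ in $\ell_\infty$. However, by the Yosida--Hewitt decomposition every element of $(\ell_\infty)^*$ splits into an $\ell_1$-part (which sends $e_n$ to $0$) and a purely finitely additive part that vanishes on $c_0$, so $(e_n)$ is in fact weakly null in $\ell_\infty$ and cannot witness failure of (ii). The constant-direction operator $T(x)=f(x)\,\one$ sidesteps this by packaging the scalar obstruction $f(x_n)\not\to 0$ into $\ell_\infty$ as a rank-one image that any single coordinate evaluation detects, which is exactly why the weak formulation here is strictly subtler than the norm formulation in Theorem \ref{2020}.
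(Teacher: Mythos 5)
Your proof is correct, and while the easy direction (i)$\Rightarrow$(ii) coincides with the paper's (both reduce to Lemma \ref{300}/Theorem \ref{402} via \cite[Theorem 7]{Z}), your argument for (ii)$\Rightarrow$(i) takes a genuinely different and more economical route. The paper also starts from a positive bounded disjoint sequence $(x_n)$ that is not weakly null, but then invokes \cite[Lemma 3.4]{AEH} to produce biorthogonal functionals $(g_n)$ and builds the operator $T(x)=(g_1(x),(g_1+g_2)(x),\ldots)$, so that $T(x_n)$ becomes the tail indicator $(0,\ldots,0,1,1,\ldots)$; the failure of weak nullity of these tails is then extracted from Dini's theorem \cite[Theorem 3.52]{AB}. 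You instead pick a single $f\in E'$ with $f(x_n)\not\to 0$ and use the rank-one operator $T(x)=f(x)\,\one$, whose failure is detected by a single coordinate functional. Both arguments are valid; yours avoids \cite{AEH} and Dini's theorem entirely, and in fact shows that $\ell_\infty$ plays no special role --- the same construction gives (ii)$\Rightarrow$(i) with $\ell_\infty$ replaced by any nonzero Banach space, which is strictly more than the paper's $\ell_\infty$-specific construction yields. Your side remark about why the operator from Theorem \ref{2020} cannot simply be reused --- namely that $(e_n)$ is weakly null in $\ell_\infty$ by the Yosida--Hewitt decomposition, so the norm obstruction does not survive in the weak setting --- correctly identifies precisely the difficulty that the paper's partial-sum device is designed to circumvent.
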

\begin{proof}
$(i)\to (ii)$. It is a direct consequence of Theorem \ref{402}.

$(ii)\to (i)$. Suppose not. So, $E$ contains a lattice copy of $\ell_1$. Therefore, $E$ possesses a positive bounded disjoint sequence $(x_n)$ which is not weakly null.
By \cite[Lemma 3.4]{AEH}, there exists a bounded positive disjoint sequence $(g_n)$ in $E'$  such that $g_n(x_n)=1$ and $g_n(x_m)=0$ for all $m \neq n$. Define the operator $T:E\to \ell_{\infty}$ by $T(x)=(g_1(x),(g_1+g_2)(x),\ldots)$. $T$ is well-defined because the sequence $(g_i)$ is disjoint and bounded. Observe that $x_n\xrightarrow{uaw}0$ by \cite[Lemma 2]{Z} but $T(x_n)=(0,\ldots,0,1,1,\ldots)$ in which the zero is appeared $n$-times. By using Dini's theorem (\cite[Theorem 3.52]{AB}), $T(x_n)\nrightarrow 0$ weakly and so by \cite[Theorem 7]{Z}, $T(x_n)\nrightarrow 0$ in the $uaw$-topology, a contradiction.
%Now, it is obvious the operator $T:E\to F$ defined via $T=\iota o S$ is not unbounded continuous, a contradiction; in which, $\iota$ is the canonical embedding from $\ell_{\infty}$ into $F$.

%For the latter case, $E$ contains a positive bounded disjoint sequence $(x_n)$ which is not weakly null. Again, using a similar argument as we had in the proof of the first part, we get the desired result.
\end{proof}
Moreover, by considering Corollary \ref{403}, we obtain the following.
\begin{corollary}
Suppose $E$ is a Banach lattice. For every operator $T:E\to \ell_{\infty}$, the following assertions are equivalent.
\begin{itemize}
		\item[\em (i)] {$E'$ is order continuous}.
	\item[\em (ii)] {Every continuous operator $T:E\to \ell_{\infty}$ is sequentially $uaw$-continuous}.
		\end{itemize}
\end{corollary}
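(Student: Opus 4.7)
The plan is to reduce the statement to Theorem \ref{2021} by observing that $\ell_\infty$ is an $AM$-space, which lets us invoke Corollary \ref{403} to swap ``sequentially unbounded continuous'' with ``sequentially $uaw$-continuous'' at will. Thus the corollary is really just a translation of Theorem \ref{2021} under the bridge supplied by Corollary \ref{403}; no new analytical content is needed.

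For the direction $(i)\to (ii)$, I would start from a continuous operator $T:E\to\ell_\infty$ and invoke Theorem \ref{2021} (whose $(i)\to(ii)$ itself comes from Theorem \ref{402}) to conclude that $T$ is sequentially unbounded continuous. Then, since $\ell_\infty$ is an $AM$-space, Corollary \ref{403} gives that sequentially unbounded continuous and sequentially $uaw$-continuous coincide for operators into $\ell_\infty$, so $T$ is sequentially $uaw$-continuous.

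For the direction $(ii)\to (i)$, I would run the same chain backwards: assume every continuous $T:E\to\ell_\infty$ is sequentially $uaw$-continuous; apply Corollary \ref{403} in the reverse direction to upgrade each such $T$ to a sequentially unbounded continuous operator; then invoke Theorem \ref{2021} $(ii)\to(i)$ to conclude that $E'$ is order continuous.

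The only step that requires any care is making sure that Corollary \ref{403} really does give an ``if and only if'' for sequences when the codomain is $\ell_\infty$, which it does since $\ell_\infty$ is the prototypical $AM$-space. I do not anticipate an obstacle; the proof is essentially a one-line composition of the two previously established results, and can be stated in a couple of sentences referencing Theorem \ref{2021} and Corollary \ref{403}.
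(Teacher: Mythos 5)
Your proposal matches the paper's own argument exactly: the paper derives this corollary by simply invoking Corollary \ref{403} (with $F=\ell_\infty$ an $AM$-space) to interchange ``sequentially unbounded continuous'' and ``sequentially $uaw$-continuous'' in Theorem \ref{2021}. Nothing further is needed.
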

In the following, we prove one of the main results of \cite{EGZ} under less hypotheses; more precisely, we show that $uaw$-Dunford-Pettis operators and $M$-weakly compact ones are in fact the same.

Recall that an operator $T:E\to X$, where $E$ is a Banach lattice and $X$ is a Banach space, is called $M$-weakly compact if for every norm bounded disjoint sequence $(x_n)$ in $E$, $\|T(x_n)\|\rightarrow 0$. Moreover, $T$ is said to be $o$-weakly compact if $T[0,x]$ is a weakly relatively compact set for every $x\in E_{+}$. Also, note that a continuous operator $T:X\to E$ is said to be $L$-weakly compact if every disjoint sequence in the solid hull of $T(B_X)$ is norm null.
\begin{theorem}\label{400}
Suppose $E$ is a Banach lattice and $X$ is a Banach space. If $T:E\to X$ is an $M$-weakly compact operator then it is $uaw$-Dunford-Pettis.
\end{theorem}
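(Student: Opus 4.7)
The plan is to combine the standard truncation characterization of $M$-weakly compact operators with the classical fact that such operators are Dunford-Pettis. Let $(x_n)\subseteq E$ be a norm bounded $uaw$-null sequence; the goal is to prove $\|T(x_n)\|\to 0$. I would first reduce to positive sequences: since $0\le x_n^{\pm}\wedge u\le |x_n|\wedge u\xrightarrow{w}0$ for every $u\in E_{+}$ and $E'_{+}$ separates points, one has $x_n^{\pm}\wedge u\xrightarrow{w}0$, i.e., $x_n^{\pm}\xrightarrow{uaw}0$. As $T(x_n)=T(x_n^{+})-T(x_n^{-})$, it suffices to handle the case $(x_n)\subseteq E_{+}$, and after a harmless rescaling also $(x_n)\subseteq B_{E}$.

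Fix $\varepsilon>0$. By the standard quantitative characterization of $M$-weakly compact operators (see \cite[Theorem 5.60]{AB}), there is $u\in E_{+}$ such that $\|T((|y|-u)^{+})\|<\varepsilon$ for every $y\in B_{E}$. Applied to our positive $x_n$ this gives $\|T((x_n-u)^{+})\|<\varepsilon$, and the identity $x_n=x_n\wedge u+(x_n-u)^{+}$ yields $\|T(x_n)\|\le\|T(x_n\wedge u)\|+\varepsilon$. Thus everything reduces to showing $\|T(x_n\wedge u)\|\to 0$.

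The truncated sequence satisfies $x_n\wedge u=|x_n|\wedge u\xrightarrow{w}0$ directly from the definition of $uaw$-convergence together with the positivity of $x_n$. Invoking the classical result that every $M$-weakly compact operator is Dunford-Pettis (see \cite[Theorem 5.63]{AB}), we obtain $\|T(x_n\wedge u)\|\to 0$. Hence $\limsup_n\|T(x_n)\|\le\varepsilon$, and letting $\varepsilon\downarrow 0$ completes the argument.

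The main obstacle is the appeal to ``\emph{$M$-weakly compact implies Dunford-Pettis}''. If one wishes to avoid this citation, a substitute argument is: $y_n:=x_n\wedge u$ lies in the order interval $[0,u]$ and so can be viewed inside the principal ideal $E_u$, which under its order-unit norm is an $AM$-space with unit (a $C(K)$-space) and therefore has the Dunford-Pettis property; the restriction $T|_{E_u}$ remains $M$-weakly compact, hence weakly compact, so $\|T(y_n)\|\to 0$ by the Dunford-Pettis property of $E_u$. This gives a self-contained route to the same conclusion.
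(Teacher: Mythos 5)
Your skeleton is essentially the paper's: reduce to positive sequences, use the Meyer--Nieberg approximation [AB, Theorem 5.60] to write $x_n=x_n\wedge u+(x_n-u)^{+}$ with $\|T((x_n-u)^{+})\|<\varepsilon$, and then kill the truncated part. The positivity reduction and the truncation step are both correct. The problem is the remaining step, $\|T(x_n\wedge u)\|\to 0$, and both of your justifications for it fail. The general principle you invoke first --- that every $M$-weakly compact operator is Dunford--Pettis --- is false: the inclusion $L_2[0,1]\to L_1[0,1]$ is $M$-weakly compact (a bounded disjoint sequence in $L_2$ has supports whose measures tend to zero, so its $L_1$-norms vanish by Cauchy--Schwarz), yet the Rademacher sequence is weakly null in $L_2$ and has $L_1$-norm one, so this operator is not Dunford--Pettis; there is no such theorem in Aliprantis--Burkinshaw. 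Your application only needs the implication for the order bounded weakly null sequence $x_n\wedge u$, and that restricted statement is true, but it is exactly the content that still has to be proved.

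Your fallback via the principal ideal $E_u\cong C(K)$ does not close the gap either. The Dunford--Pettis property of $C(K)$ applies to sequences that are weakly null for the order-unit norm of $E_u$, i.e., tested against all of $(E_u,\|\cdot\|_u)'$, and weak (even norm) convergence in $E$ of a sequence lying in $[0,u]$ does not give this: with $E=L_2[0,1]$ and $u=\mathbb{1}$ one has $E_u=L_\infty[0,1]$, and $\chi_{[0,1/n]}$ is norm null in $L_2$ but not weakly null in $L_\infty$ (a weak-$*$ cluster point of the functionals $f\mapsto n\int_0^{1/n}f\,dt$ takes the value $1$ at every $\chi_{[0,1/n]}$). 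The paper closes this step differently: $M$-weak compactness implies $o$-weak compactness by Dodds' theorem [AB, Theorem 5.57], and an $o$-weakly compact operator carries order bounded sequences with weakly null moduli to norm null sequences [AB, Exercise 3, p.~336]; since $0\le x_n\wedge u\le u$ and $x_n\wedge u\xrightarrow{w}0$, this yields $\|T(x_n\wedge u)\|\to 0$. You should replace both of your justifications with this (or an equivalent) argument.
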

\begin{proof}
Suppose $(x_n)$ is a positive norm bounded $uaw$-null sequence in $E$. This means that $x_n\wedge u\xrightarrow{w}0$ for any positive $u\in E$. Observe that $T$ is also $o$-weakly compact ( by \cite[Theorem 5.57]{AB} due to Dodds). So, using \cite[Exercise 3, Page 336]{AB}, convinces us that $\|T(x_n\wedge u)\|\rightarrow 0$. Note that by \cite[Theorem 5.60]{AB} due to Meyer-Nieberg, the proof would be complete.
\end{proof}
By using Theorem \ref{400} and \cite[Proposition 2.6]{EGZ}, we conclude that the notions of an $M$-weakly compact operator and a $uaw$-Dunford-Pettis operator agree. Therefore, many results regarding $uaw$-Dunford-Pettis operators mentioned in \cite{EGZ} can be restated directly for $M$-weakly compact operators.
Finally, we shall show that unbounded continuous operators are very closed to $M$-weakly compact operators and $uaw$-continuous operators are related to $L$-weakly compact operators. Before we proceed, we consider the following definition.

Suppose $E$ is a Banach lattice and $X$ is a Banach space. A continuous operator $T:E\to X$ is said to be  $WM$-weakly compact if for every bounded disjoint sequence $(x_n)\subseteq E$, $T(x_n)\xrightarrow{w}0$. A continuous operator $T:X\to E$ is called a $WL$-weakly compact operator if $y_n\xrightarrow{w}0$ for each disjoint sequence $(y_n)$ in the solid hull of $T(B_X)$. It is easy to see that every $M$-weakly compact operator is $WM$-weakly compact and every $L$-weakly compact operator is a $WL$-weakly compact operator. Nevertheless, the inclusion map from $c_0$ into $\ell_{\infty}$ is both $WM$-weakly compact  and $WL$-weakly compact, although, it is neither an $M$-weakly compact operator nor an $L$-weakly compact operator.

These classes of continuous operators enjoy an approximation property similar to \cite[Theroem 5.60]{AB}.
\begin{lemma}\label{500}
For a Banach lattice $E$ and a Banach space $X$, the following assertions hold.
\begin{itemize}
\item[\em (i)] {If $T:E\to X$ is a $WM$-weakly compact operator, then for each $\varepsilon>0$ and for each $f\in X'$, there exists some $u\in E_{+}$ such that $f(T(|x|-u)^{+})<\varepsilon$ satisfies for all $x\in E$ with $\|x\|\leq 1$}.
		\item[\em (ii)] {If $T:X\to E$ is a $WL$-weakly compact operator, then for each $\varepsilon>0$ and for each $f\in E'$, there exists some $u\in E_{+}$ such that $f((|Tx|-u)^{+})<\varepsilon$ satisfies for all $x\in X$ with $\|x\|\leq 1$}.
\end{itemize}
\end{lemma}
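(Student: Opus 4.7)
Both items are the natural adaptation of the Meyer-Nieberg approximation scheme in \cite[Theorem 5.60]{AB}: replace the norm of $T(y_n)$ by the value of a fixed functional $f$ and replace norm-null conclusions by weak-null ones. I would argue by contradiction in each case, producing from a failure of the inequality a bounded pairwise disjoint sequence that witnesses the failure of the corresponding ``weak'' compactness hypothesis.

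For part (i), I suppose toward a contradiction that there exist $\varepsilon_0>0$ and $f\in X'$ such that for every $u\in E_+$ one can find $x_u\in B_E$ with $f\bigl(T(|x_u|-u)^+\bigr)\ge \varepsilon_0$. I would then construct inductively bounded positive vectors $y_n\in E$ as follows. Pick $x_1\in B_E$ with $f(T|x_1|)\ge\varepsilon_0$ and set $y_1=|x_1|$. Having defined $y_1,\dots,y_n$, let $u_n=y_1+\cdots+y_n$, apply the failure hypothesis to $u_n$ to obtain $x_{n+1}\in B_E$ with $f\bigl(T(|x_{n+1}|-u_n)^+\bigr)\ge\varepsilon_0$, and put $y_{n+1}=(|x_{n+1}|-u_n)^+$. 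The defining identity $y_{n+1}\wedge u_n=0$ forces $y_{n+1}\wedge y_i=0$ for each $i\le n$, so $(y_n)$ is pairwise disjoint. Moreover $0\le y_n\le|x_n|$ gives $\|y_n\|\le 1$, while by construction $f(T(y_n))\ge\varepsilon_0$. Since $T$ is $WM$-weakly compact, $T(y_n)\xrightarrow{w}0$, contradicting $f(T(y_n))\ge\varepsilon_0>0$.

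Part (ii) is obtained by running the same bookkeeping on the range side. Assuming the conclusion fails, I would fix $\varepsilon_0>0$ and $f\in E'$ and choose $x_n\in B_X$ so that the positive vectors $y_1=|Tx_1|$ and $y_{n+1}=(|Tx_{n+1}|-u_n)^+$, with $u_n=y_1+\cdots+y_n$, satisfy $f(y_n)\ge\varepsilon_0$. Again $y_{n+1}\wedge u_n=0$ gives pairwise disjointness, and because $0\le y_n\le|Tx_n|$ each $y_n$ belongs to the solid hull of $T(B_X)$. The $WL$-weakly compact hypothesis yields $y_n\xrightarrow{w}0$, which contradicts $f(y_n)\ge\varepsilon_0$.

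The only point that requires care is the disjointification: it is tempting to take $u_n$ to be a scalar multiple of $|x_n|$ or just the previous $y_n$, which would secure disjointness only with the immediately preceding term. Using the \emph{cumulative} sum $u_n=y_1+\cdots+y_n$ is the key choice that makes $(y_n)$ pairwise disjoint in one stroke, and this is what allows the $WM$- and $WL$-weakly compact assumptions to bite. Everything else is routine lattice arithmetic.
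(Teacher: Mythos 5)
There is a genuine gap in both parts: the disjointness claim on which your contradiction rests is false. You set $u_n=y_1+\cdots+y_n$ and $y_{n+1}=(|x_{n+1}|-u_n)^+$ and assert that ``the defining identity $y_{n+1}\wedge u_n=0$'' forces pairwise disjointness. But for positive $a,b$ one has $(a-b)^+\wedge b\neq 0$ in general --- already in $\mathbb{R}$, taking $a=3$ and $b=1$ gives $(a-b)^+\wedge b=2\wedge 1=1$. (You seem to be conflating $(a-b)^+\wedge b$ with $(a-b)^+\wedge(a-b)^-$, which is what actually vanishes.) So the sequence $(y_n)$ you build need not be disjoint, the $WM$- (resp.\ $WL$-) weak compactness hypothesis cannot be applied to it, and the contradiction never materializes. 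The same defect kills part (ii). A correct disjointification is more delicate: the standard construction (\cite[Lemma 4.35]{AB}) uses geometric weights and a correction term, $y_{n+1}=\bigl(|x_{n+1}|-4^{n}\sum_{i\le n}|x_i|-2^{-n}x\bigr)^+$, and once you do that the lower bound $f(Ty_{n+1})\ge\varepsilon_0$ no longer follows directly from the choice of $x_{n+1}$ (note also that $f\circ T$ is not a positive functional, so you cannot push the lattice inequalities through it); one has to work with a Riesz seminorm such as $\rho(x)=\sup\{|f(Tw)|:|w|\le|x|\}$ rather than with $f(T(\cdot))$ itself.

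This is exactly why the paper does not redo the induction by hand: it simply invokes \cite[Theorem 4.36]{AB} with such a seminorm, applied to the solid set $B_E$ in (i) and to the solid hull of $T(B_X)$ in (ii); that theorem packages the correct weighted disjointification once and for all. Your overall strategy --- argue by contradiction and disjointify --- is the right idea and is what underlies that theorem, but as written the construction does not produce a disjoint sequence, so the proof does not go through.
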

\begin{proof}
$(i)$. Suppose $B$ is the closed unit ball of $E$ and $(x_n)$ is a disjoint sequence in $B$. Given $\varepsilon>0$ and $f\in X'$. By the assumption, $f(Tx_n)\rightarrow 0$. By \cite[Theorem 4.36]{AB} ( with $\rho(x)=f(|x|)$), there exists $u\in E_{+}$ with $f(T(|x|-u)^{+})<\varepsilon$ for all $x$ with $\|x\|\leq 1$.

$(ii)$. Suppose $U$ is the closed unit ball of $X$ and $A$ is the solid hull of $T(U)$. $T$ is $WL$-weakly compact so that every disjoint sequence in $A$ is weakly null. Consider the identity operator $I$ on $E$ and fix $f\in E'$. Again, by considering $\rho(x)=f(|x|)$ and using \cite[Theorem 4.36]{AB}, we see that $f(I(|y|-x)^{+})<\varepsilon$ for each $y\in A$ so that $f((|Tx|-u)^{+})<\varepsilon$ for all $x\in U$.
\end{proof}
Therefore, we have the following. Just, note that in the definition of an unbounded continuous operator, we can replace the second space with a Banach space.
\begin{theorem}
Suppose $E$ is a Banach lattice and $X$ is a Banach space. Then a continuous operator $T:E\to X$ is sequentially unbounded continuous if and only if it is a $WM$-weakly compact operator.
\end{theorem}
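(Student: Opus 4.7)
The plan is to prove the two implications separately, with essentially all the work located in the converse direction.

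The forward direction is immediate from \cite[Lemma 2]{Z}: every norm bounded disjoint sequence in a Banach lattice is $uaw$-null, so if $T$ is sequentially unbounded continuous and $(x_n)\subseteq E$ is norm bounded and disjoint, then $T(x_n)\xrightarrow{w}0$, and $T$ is $WM$-weakly compact.

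For the converse, suppose $T$ is $WM$-weakly compact and let $(x_n)$ be a norm bounded $uaw$-null sequence in $E$, with $\|x_n\|\le M$. The strategy parallels the Meyer-Nieberg pattern used in the proof of Theorem \ref{400}, except that the weak approximation provided by Lemma \ref{500}(i) replaces the norm-type approximation from \cite[Theorem 5.60]{AB}. Fix $f\in X'$ and $\varepsilon>0$; the goal is to show $|f(T(x_n))|<\varepsilon$ eventually. Because $uaw$-convergence is compatible with the lattice operations, I may reduce to the case $x_n\ge 0$ by treating $(x_n^+)$ and $(x_n^-)$ separately. The central step is the decomposition
\[x_n=(x_n\wedge u)+(x_n-u)^+\]
for a suitably chosen $u\in E_+$. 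The first summand is weakly small: by the very definition of $uaw$-null, $x_n\wedge u\xrightarrow{w}0$, and weak continuity of $T$ then yields $|f(T(x_n\wedge u))|<\varepsilon/2$ for all sufficiently large $n$. For the second summand, I apply Lemma \ref{500}(i) to produce $u\in E_+$ satisfying $|f(T((y-u)^+))|<\varepsilon/2$ uniformly for $y\in E_+$ with $\|y\|\le M$; this controls $|f(T((x_n-u)^+))|$ for every $n$. Adding the two estimates closes the argument.

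The main obstacle is purely bookkeeping: Lemma \ref{500}(i) is stated on the unit ball of $E$, whereas I need a uniform bound on the ball of radius $M$, and I need an absolute-value estimate rather than a one-sided one. Both points are routine, handled by applying the lemma to the functional $Mf$ (or by enlarging $u$ by a factor of $M$ at the end) and, if necessary, applying it to both $f$ and $-f$ and taking the resulting $u$'s appropriately so as to yield the required two-sided bound. Once this is in place the proof reduces to the clean decomposition above.
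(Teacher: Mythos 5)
Your proposal is correct and follows essentially the same route as the paper: the forward direction via \cite[Lemma 2]{Z} (bounded disjoint sequences are $uaw$-null), and the converse via Lemma \ref{500}(i) together with the decomposition $x_n=(x_n\wedge u)+(x_n-u)^{+}$, using $x_n\wedge u\xrightarrow{w}0$ for the first piece. You merely spell out the bookkeeping (reduction to positive sequences, rescaling, two-sidedness of the estimate) that the paper leaves implicit; for the last point the cleanest fix is to note that Lemma \ref{500} comes from \cite[Theorem 4.36]{AB} applied to the seminorm $\rho(x)=\abs{f(Tx)}$, which already yields the two-sided bound.
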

\begin{proof}
The direct implication is done using \cite[Lemma 2]{Z}. For the other side, assume that $T:E\to X$ is $WM$-weakly compact and $(x_n)$ is a bounded $uaw$-null sequence in $E$. Given $\varepsilon>0$ and $f\in X'$. Considering Lemma \ref{500}, we can find $u\in E_{+}$ such that $f(T(|x_n|-u)^{+})<\varepsilon$ for all $n\in \Bbb N$. Note that $x_n\wedge u\xrightarrow{w}0$ so that $T(x_n\wedge u)\xrightarrow{w}0$. Therefore, $T(x_n)\xrightarrow{w}0$, as claimed.
\end{proof}
Now, we state a version of \cite[Theorem 2.10]{EGZ} for the case of unbounded continuous operators.
\begin{proposition}
Suppose $E$ and $F$ are Banach lattices. Then every $WL$-weakly compact sequentially $uaw$-continuous operator $T:E\to F$ is sequentially unbounded continuous.
\end{proposition}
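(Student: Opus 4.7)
The plan is to chain the two hypotheses together: sequential $uaw$-continuity gives $uaw$-convergence of the image, and $WL$-weak compactness (via Lemma \ref{500}(ii)) upgrades $uaw$-convergence to weak convergence. Concretely, let $(x_n)$ be a norm bounded $uaw$-null sequence in $E$; after rescaling we may assume $\|x_n\|\le 1$ for all $n$. By the sequential $uaw$-continuity of $T$, we have $T(x_n)\xrightarrow{uaw}0$ in $F$, so $|T(x_n)|\wedge u\xrightarrow{w}0$ for every $u\in F_+$.

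To establish weak convergence of $T(x_n)$, it suffices to show $f(|T(x_n)|)\to 0$ for every $f\in F'_+$, because a general functional decomposes as $g=g^+-g^-$ and $|g(T(x_n))|\le |g|(|T(x_n)|)$. Fix such an $f$ and $\varepsilon>0$. Since $T$ is $WL$-weakly compact and $\|x_n\|\le 1$, Lemma \ref{500}(ii) supplies some $u\in F_+$ with
\[
f\bigl((|T(x_n)|-u)^+\bigr)<\varepsilon \quad \text{for all } n.
\]
Writing $|T(x_n)|=|T(x_n)|\wedge u+(|T(x_n)|-u)^+$ and applying $f$ yields
\[
f(|T(x_n)|)\le f(|T(x_n)|\wedge u)+\varepsilon.
\]
Since the first summand tends to $0$ by the $uaw$-convergence of $T(x_n)$, we get $\limsup_n f(|T(x_n)|)\le \varepsilon$, and then $f(|T(x_n)|)\to 0$ as $\varepsilon$ was arbitrary.

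The main technical step is the bridge from $uaw$-convergence to weak convergence via the two-summand decomposition above; this is the standard Meyer--Nieberg style truncation argument, so no serious obstacle is expected once Lemma \ref{500}(ii) is in hand. The only bookkeeping point is the normalization $\|x_n\|\le 1$ needed to apply Lemma \ref{500}(ii), which is harmless by linearity of $T$ and scale-invariance of $uaw$-convergence.
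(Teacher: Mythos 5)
Your proof is correct and follows essentially the same route as the paper's: apply sequential $uaw$-continuity to get $|T(x_n)|\wedge u\xrightarrow{w}0$, invoke Lemma \ref{500}(ii) for the tail bound $f((|T(x_n)|-u)^{+})<\varepsilon$, and combine. You merely spell out the details the paper leaves implicit (the identity $|y|=|y|\wedge u+(|y|-u)^{+}$, the reduction to positive functionals, and the normalization $\|x_n\|\le 1$), which is a welcome clarification but not a different argument.
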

\begin{proof}
Suppose $(x_n)$ is a bounded $uaw$-null sequence in $E$. Given $\varepsilon>0$ and $f\in F'$. By Lemma \ref{500}, there exists $u\in F_{+}$ such that $f((|T(x_n)|-u)^{+})<\varepsilon$. Note that $|T(x_n)|\wedge u\xrightarrow{w}0$. Thus, $T(x_n)\xrightarrow{w}0$, as desired.
\end{proof}
By using \cite[Theorem 4.34]{AB}, we obtain the following surprising but simple result.
\begin{proposition}
Suppose $X$ is a Banach space and $E$ is a Banach lattice. Then every weakly compact operator $T:X\to E$ is $WL$-weakly compact.
\end{proposition}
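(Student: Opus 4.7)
The plan is essentially a one-shot application of the cited tool \cite[Theorem 4.34]{AB}, which says (in its usual formulation) that every disjoint sequence lying in the solid hull of a relatively weakly compact subset of a Banach lattice is weakly null. So I would first unfold the two definitions in play: $T:X\to E$ being weakly compact means $T(B_X)$ is a relatively weakly compact subset of $E$, while $T$ being $WL$-weakly compact means every disjoint sequence in the solid hull of $T(B_X)$ is weakly null. Once this translation is in front of the reader, the two statements are clearly equivalent up to the application of the quoted theorem.

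Concretely, I would let $(y_n)$ be an arbitrary disjoint sequence in the solid hull of $T(B_X)$, invoke weak compactness of $T$ to get that $T(B_X)$ is relatively weakly compact in $E$, and then apply \cite[Theorem 4.34]{AB} to the set $A:=T(B_X)$ to conclude $y_n\xrightarrow{w}0$. Since $(y_n)$ was arbitrary, $T$ meets the definition of $WL$-weakly compactness.

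There is no genuine obstacle here; the only thing to be slightly careful about is that \cite[Theorem 4.34]{AB} is stated for solid hulls of relatively weakly compact sets in Banach lattices, and we are applying it to $T(B_X)$ itself (not to $T(B_X)\cup(-T(B_X))$ or anything symmetric), but this is fine because the solid hull operation already absorbs sign changes. I would close by remarking that one cannot strengthen the conclusion to $L$-weakly compact: weak compactness of $T$ does not in general force the disjoint sequences in the solid hull of $T(B_X)$ to be norm null, as the inclusion $c_0\hookrightarrow\ell_\infty$ already exhibits on a related level.
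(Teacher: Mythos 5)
Your proposal is correct and is exactly the paper's argument: the paper proves this proposition by the same single application of \cite[Theorem 4.34]{AB} to the relatively weakly compact set $T(B_X)$, whose solid hull then contains only weakly null disjoint sequences.
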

For the converse, we have the following.
\begin{theorem}\label{200}
Suppose $E$ is a Banach lattice with a strong unit and $F$ is an order continuous Banach lattice. Then every positive $WL$-weakly compact operator $T:E\to F$ is  weakly compact.
\end{theorem}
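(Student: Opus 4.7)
The plan is to show that, under the stated hypotheses, $T(B_E)$ is forced to lie inside an order interval of $F$; weak compactness of $T$ then follows from the classical characterization of order continuity by weak compactness of order intervals. The $WL$-weakly compactness hypothesis will not be used at all in the argument.

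First, fix a strong unit $e \in E_+$. A standard consequence of having a strong unit is that the closed unit ball $B_E$ is contained in $\lambda[-e, e]$ for some $\lambda > 0$; indeed, the gauge functional of $[-e,e]$ defines a lattice norm on $E$ making it an $AM$-space with unit, and since both norms are complete and one dominates the other, the open mapping theorem forces their equivalence. Next, the positivity of $T$ gives $T[-e, e] \subseteq [-T(e), T(e)]$, and therefore
\[
T(B_E) \subseteq \lambda[-T(e), T(e)].
\]

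Since $F$ is order continuous, every order interval in $F$ is weakly compact; this is a standard equivalent formulation of order continuity for a Banach lattice (see \cite{AB}). In particular, $\lambda[-T(e), T(e)]$ is weakly compact, so $T(B_E)$ is relatively weakly compact, which means $T$ is weakly compact.

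The main (and only nontrivial) step is the sandwich containment above; once it is in hand, order continuity of $F$ does the rest. There is no real obstacle, and one notes that the hypothesis that $T$ is $WL$-weakly compact is in fact redundant: positivity of $T$ together with the strong unit in $E$ and the order continuity of $F$ already force weak compactness.
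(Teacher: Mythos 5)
Your proof is correct, but it takes a more direct route than the paper's. Both arguments begin the same way: the strong unit forces the closed unit ball $U$ of $E$ to be order bounded (the paper phrases this via the Kakutani representation $E\cong C(K)$, you via the equivalence of the original norm with the gauge norm of $[-e,e]$ --- the same standard fact), and positivity then makes $T(U)$ order bounded, say $T(U)\subseteq\lambda[-Te,Te]$. From there the paper passes through the $L$-weakly compact machinery: it takes a disjoint sequence in the solid hull of $T(U)$, notes it is order bounded and disjoint hence weakly null, invokes order continuity of $F$ to upgrade this to norm null, and then applies \cite[Theorem 5.55]{AB} to conclude that the solid hull is relatively weakly compact. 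You instead invoke the characterization of order continuity by weak compactness of order intervals and conclude immediately that $\lambda[-Te,Te]$, and hence $T(U)$, is relatively weakly compact. Your version is shorter and more elementary, at the cost of not exhibiting the stronger conclusion implicit in the paper's argument (that $T(U)$ sits inside an $L$-weakly compact set). Your closing observation is also accurate and worth noting: the $WL$-weak compactness hypothesis is never used --- and in fact it is not genuinely used in the paper's own proof either, since disjoint order bounded sequences are automatically weakly null in any Banach lattice --- so the theorem holds for every positive operator under the remaining hypotheses.
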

\begin{proof}
Since $E$ is a $C(K)$-space for some compact Hausdorff space $K$, order boundedness and boundedness notions agree in $E$. So, if $U$ denotes the closed unit ball of $E$, then it is order bounded. Therefore $T(U)$, and so, $Sol(T(U))$ ( the solid hull of $T(U)$), is also order bounded. Now, suppose $(y_n)$ is a disjoint bounded sequence in $Sol(T(U))$. Therefore, it is order bounded and disjoint so that weakly null. By \cite[Theorem 4.17]{AB}, it is norm null. Now, by \cite[Theorem 5.55]{AB}, $Sol(T(U))$ is weakly relatively compact. Therefore, $T(U)$ is also weakly relatively compact.
\end{proof}
Note that hypotheses in Theorem \ref{200} are essential and can not be removed. Consider the identity operator on $\ell_{\infty}$. Using \cite[Lemma 2 and Theorem 7]{Z}, we conclude that it is $WL$-weakly compact. But it is not weakly compact, definitely. Moreover, consider the identity operator on $c_0$. With the same argument, we see that it is  $WL$-weakly compact but certainly not a weakly compact operator.
%\begin{theorem}\label{502}
%Suppose $E$ is an order continuous Banach lattice and $X$ is a Banach space. Then every positive weakly $M$-weakly compact operator $T:E\to X$ is weakly compact.
%\end{theorem}
%\begin{proof}
%Given $\varepsilon>0$ and $f\in X'$. Using Lemma \ref{500}, one can choose $u\in E_{+}$ with $f(T(|x|-u)^{+})<\varepsilon$ for all $x\in E$ with $\|x\|\leq 1$. Let $U$ and $V$ denote the closed unit balls of $E$ and $X$, respectively. We conclude that $T(U_{+})\subseteq T[0,u]+\varepsilon V$.
%Now, if $(u_n)$ is a disjoint sequence in $[0,u]$, then, $u_n\xrightarrow{w}0$. By \cite[Theorem 4.17]{AB}, $\|u_n\|\rightarrow 0$ so that $T(u_n)\rightarrow 0$. The remaining part follows from the proof of \cite[Theorem 5.61]{AB}.
%\end{proof}
%Observe that order continuity of $E$ in Theorem \ref{502} is essential and can not be removed. The inclusion map from $c_0$ into $\ell_{\infty}$ is weakly $M$-weakly compact but certainly not $M$-weakly compact.

\end{document}